\newtheorem{theorem}{Theorem}[section]
\newtheorem{defn}[theorem]{Definition}
\newtheorem{lemma}[theorem]{Lemma}
\newtheorem{eple}[theorem]{Example}
\newtheorem{rmk}[theorem]{Remarks}
\newtheorem{dsc}[theorem]{Discussion}
\newtheorem{nota}[theorem]{Notation}
\newsavebox{\indbin}
\savebox{\indbin}{\begin{picture}(0,0)
\newlength{\gnu}
\settowidth{\gnu}{$\smile$} \setlength{\unitlength}{.5\gnu}
\put(-1,-.65){$\smile$} \put(-.25,.1){$|$}
\end{picture}}
\newcommand{\be}{\begin{enumerate}}
\newcommand{\bd}{\begin{defn}}
\newcommand{\bt}{\begin{theorem}}
\newcommand{\bl}{\begin{lemma}}
\newcommand{\ee}{\end{enumerate}}
\newcommand{\ed}{\end{defn}}
\newcommand{\et}{\end{theorem}}
\newcommand{\el}{\end{lemma}}
\begin{document}
\title{A Simple Proof of the Uniform
Convergence of
Fourier Series in Solutions to the Wave Equation}
\author{Tristram de Piro}
\maketitle
\begin{abstract}
Using methods of \cite{dep1}, we show that the time dependent Fourier series of any $F\in C^{\infty}(0,L)$, solving the wave equation, with $F(0,t)=F(L,t)=0$, converges uniformly to $F$, on $[0,L]$, and find an explicit formula for such series.
\end{abstract}
\begin{defn}
\label{wave}
We let;\\

$C^{n}([0,L])=\ \ \ \{f\in C([0,L]): f|_{(0,L)}\in C^{n}(0,L),$\\

\indent \ \ \ \ \ \ \ \ \ \ \ \ \ \ \ \ \ \ $(\forall i\leq n)\exists r_{i}\in C[0,L], r_{i}|_{(0,L)}=f^{(i)}\}$, (\footnote{This definition is equivalent to, $(\forall i\leq n)\{f^{(i)}_{+}(0), f^{(i)}_{-}(L)\}$ exist, where, for $i\leq n$, $f^{(i)}_{+}(0)$ is defined inductively, by ,$f^{(i)}_{+}(0)=lim_{s\rightarrow 0}{f^{(i-1)}(s)-f^{(i-1)}_{+}(0)\over s}$, and, similarly, for $f^{(i)}_{-}(L)$. In order to see this, just observe that, for $i\leq n$, $lim_{s\rightarrow 0}{f^{(i-1)}(s)-f^{(i-1)}_{+}(0)\over s}=lim_{s\rightarrow 0}f^{(i)}(s)$, by L'Hopital's Rule and the Intermediate Value Theorem.})\\

$C^{n}_{0}([0,L])=\ \ \ \{f\in C^{n}([0,L]):\ f(0)=f(L)=0\}\\$\\

$C^{\infty}([0,L])=\ \ \ \{f\in C([0,L]): f|_{(0,L)}\in C^{\infty}(0,L)$\\

\indent \ \ \ \ \ \ \ \ \ \ \ \ \ \ \ \ \ \ \ \ $\forall({i}\leq n)\exists r_{i}\in C([0,L]), r_{i}|_{(0,L)}=f^{(i)}$\}\\

$C^{\infty}_{0}([0,L])=\ \ \ \{f\in C^{\infty}([0,L]):\ f(0)=f(L)=0\}$\\

$C^{n}([0,L]\times\mathcal{R})=\{F\in C([0,L]\times\mathcal{R}): F|_{(0,L)\times\mathcal{R}}\in C^{n}((0,L)\times$\\

\indent $
 \ \ \ \ \ \ \ \ \ \ \ \ \ \ \ \ \ \ \ \ \ \ \ \mathcal{R}), (\forall i\leq n)\exists r_{i}\in C([0,L]\times\mathcal{R})\ r_{i}|_{(0,L)\times\mathcal{R}}={\partial^{i} F\over \partial x^{i}}\}$\\

 $C^{n}_{0}([0,L]\times\mathcal{R})=\{F\in C^{n}([0,L]\times\mathcal{R}):\ (\forall t\in\mathcal{R}), F(0,t)$\\

 \indent \ \ \ \ \ \ \ \ \ \ \ \ \ \ \ \ \ \ \ $=F(L,t)=0\}$\\

$C^{\infty}([0,L]\times\mathcal{R})=\{F\in C([0,L]\times\mathcal{R}): F|_{(0,L)\times\mathcal{R}}\in C^{\infty}((0,L)\times$\\

\indent $
 \ \ \ \ \ \ \ \ \ \ \ \ \ \ \ \ \ \ \ \ \ \ \ \mathcal{R}), \forall(i\leq n)\exists r_{i}\in C([0,L]\times\mathcal{R})\ r_{i}|_{(0,L)\times\mathcal{R}}={\partial^{i} F\over \partial x^{n}}\}$\\

$C^{\infty}_{0}([0,L]\times\mathcal{R})=\{F\in C^{\infty}([0,L]\times\mathcal{R}):\ (\forall t\in\mathcal{R}), F(0,t)=$\\

\indent \ \ \ \ \ \ \ \ \ \ \ \ \ \ \ \ \ \  \ \ \ \ \ \ $F(L,t)=0\}$\\

We let $\{T,M,L\}$ denote the tension,mass and length of a string, with $\mu={M/L}$, the mass per unit length. The wave equation;\\

${\partial^{2} F\over \partial t^{2}}={T\over\mu}{\partial^{2} F\over \partial x^{2}}$ $(*)$\\

with boundary condition $F(0,t)=F(L,t)=0$, for $t\in\mathcal{R}$, describes the motion of a vibrating string under tension, fixed at the endpoints, (\footnote{By a solution to the wave equation, we mean $F\in C^{\infty}_{0}([0,L]\times\mathcal{R})$, satisfying the equation $(*)$ on $(0,L)\times\mathcal{R}$}).\\

We say that $h\in C([-L,L])$ is symmetric, if $h(-x)=h(x)$, for $x\in [-L,L]$, (with endpoints identified). We say that $h\in C([-L,L])$ is asymmetric if $h(-x)=-h(x)$,for $x\in [-L,L]$, (with endpoints identified). We use the same notation as above for functions on $[-L,L]$, (with endpoints identified). We define;\\

$C^{n}((-L,0)\cup (0,L))=\{f\in C((-L,0)\cup (0,L)): \exists(r_{1}\in C^{n}([-L,0]),r_{2})\in C^{n}([0,L]), r_{1}|_{(-L,0)}=f|_{(-L,0)}, r_{2}|_{(0,L)}=f|_{(0,L)} \}$\\

\end{defn}

We require the following results;\\

\begin{lemma}
\label{symasym2}
Let $h\in C([-L,L])$ be asymmetric, with $h|_{(-L,0)\cup (0,L)}\in C^{1}((-L,0)\cup (0,L))$, $(*)$, then $h(0)=h(L)=h(-L)=0$, $h'_{+}(-L)=h'_{-}(L)$, $h'_{+}(0)=h'_{-}(0)$, $h'\in C([-L,L])$, and $h'$ is symmetric. Let $h\in C([-L,L])$ be symmetric, with $h|_{(-L,0)\cup (0,L)}\in C^{1}((-L,0)\cup (0,L))$, $(**)$, and $h'_{+}(-L)=h'_{-}(L)=0$, $h'_{+}(0)=h'_{-}(0)=0$, then $h'\in C([-L,L])$ is asymmetric.

\end{lemma}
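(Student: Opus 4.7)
The plan is to reduce both halves of the lemma to routine applications of the chain rule together with the continuity of the one-sided extensions of $h'$ guaranteed by the hypothesis $h|_{(-L,0)\cup(0,L)}\in C^{1}((-L,0)\cup(0,L))$. Recall that this hypothesis, by the definition preceding the lemma, provides functions $r_{1}\in C^{1}([-L,0])$ and $r_{2}\in C^{1}([0,L])$ whose restrictions to the corresponding open intervals agree with $h'|_{(-L,0)}$ and $h'|_{(0,L)}$, so in particular the four one-sided limits $h'_{+}(-L),\,h'_{-}(0),\,h'_{+}(0),\,h'_{-}(L)$ all exist.

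For the asymmetric (odd) case, I would first extract the vanishing statements $h(0)=h(L)=h(-L)=0$ algebraically from $h(-x)=-h(x)$: setting $x=0$ gives $h(0)=-h(0)$, while the endpoint identification $h(L)=h(-L)$ combined with $h(-L)=-h(L)$ forces $h(L)=h(-L)=0$. Next I would differentiate the asymmetry identity on the open set $(-L,0)\cup(0,L)$ via the chain rule to obtain $h'(-x)=h'(x)$, showing $h'$ is symmetric on the interior. Passing to one-sided limits then gives
\[
h'_{-}(0)=\lim_{x\to 0^{-}}h'(x)=\lim_{y\to 0^{+}}h'(-y)=\lim_{y\to 0^{+}}h'(y)=h'_{+}(0),
\]
and identically $h'_{+}(-L)=h'_{-}(L)$. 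Since $r_{1}(0)=h'_{-}(0)$ and $r_{2}(0)=h'_{+}(0)$ now agree, the two pieces glue to a single continuous extension on $[-L,L]$; agreement at the identified endpoints $\pm L$ yields $h'\in C([-L,L])$, and symmetry of the extension follows by continuity from the interior identity.

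For the symmetric (even) case, differentiating $h(-x)=h(x)$ on the interior gives $h'(-x)=-h'(x)$, so $h'$ is asymmetric there. Note that taking one-sided limits only yields $h'_{-}(0)=-h'_{+}(0)$ and $h'_{+}(-L)=-h'_{-}(L)$, which alone do \emph{not} force continuity at $0$ or at the identified endpoints; this is precisely why the hypothesis $h'_{+}(0)=h'_{-}(0)=0$ and $h'_{+}(-L)=h'_{-}(L)=0$ is imposed. Under this hypothesis the values $r_{1}(0)$ and $r_{2}(0)$ coincide (both equal $0$), and the values at $\pm L$ also match (both equal $0$), so the two pieces glue into $h'\in C([-L,L])$. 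Asymmetry of the extended $h'$ then follows from the interior identity together with the fact that the boundary values $h'(0)=h'(\pm L)=0$ are themselves consistent with oddness.

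The only real "obstacle" is bookkeeping: one must keep track of the fact that the $C^{1}$-on-$(-L,0)\cup(0,L)$ hypothesis supplies two \emph{a priori} unrelated $C^{1}$ extensions on the closed half-intervals, and recognize that the content of the lemma is precisely the compatibility of these two extensions at $0$ and at the identified endpoints $\pm L$. No analytic tool beyond the chain rule applied to the parity relation and basic properties of one-sided limits is required.
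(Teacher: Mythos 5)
Your proposal is correct and follows essentially the same route as the paper: endpoint vanishing extracted algebraically from oddness, matching of the one-sided derivatives at $0$ and at the identified endpoints $\pm L$ via the parity relation, gluing of $h'$ into a continuous function using the $C^{1}$-extension hypothesis (equivalently, the L'Hopital/mean-value identification of one-sided limits of $h'$ with one-sided derivatives, which the paper invokes explicitly and you absorb into the definition's footnote), and the observation that the even case genuinely needs the extra hypothesis $h'_{+}(0)=h'_{-}(0)=h'_{+}(-L)=h'_{-}(L)=0$. The only cosmetic difference is that you differentiate the parity identity on the interior and pass to limits, while the paper manipulates the difference quotients directly; the content is the same.
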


\begin{proof}
For the first part, we have, if $h$ is asymmetric, satisfying $(*)$, then $h(L)=-h(-L)=-h(L)$ and $h(0)=-h(-0)=-h(0)$, so $h(0)=h(L)=h(-L)=0$. We have that;\\

$h'_{+}(-L)=lim_{s\rightarrow 0}{h(-L+s)-h(-L)\over s}$ \\

$=lim_{s\rightarrow 0} {h(-L+s)\over s}=lim_{s\rightarrow 0}{-h(L-s)\over s}$ (by asymmetry and $h(-L)=0$)\\

$=lim_{s\rightarrow 0}{h(L)-h(L-s)\over s}=h'_{-}(L)$ (as $h(L)$=0)\\

Similarly, $h'_{+}(0)=h'_{-}(0)$. By L'Hopital's rule, and the fact that $h|_{(-L,0)\cup (0,L)}\in C^{1}((-L,0)\cup (0,L))$, we have that $lim_{s\rightarrow 0}h'(s)=lim_{s\rightarrow 0}{h(s)-h(0)\over s}$\\
$=h'_{+}(0)$, and, similarly, $lim_{s\rightarrow 0}h'(-s)=h'_{-}(0)$, $lim_{s\rightarrow 0}h'(L-s)=h'_{-}(L)$, $lim_{s\rightarrow 0}h'(-L+s)=h'_{-}(L)$. Hence, $h'\in C([-L,L])$, and $h'$ is symmetric by the fact that $h(x)=-h(-x)$, and, therefore, $h'(x)=h'(-x)$, for $x\in (-L,0)\cup (0,L)$, and, automatically, $h'(L)=h'(-L)$, $h'(0)=h'(-0)$, as these points are fixed.\\

Let $h\in C([-L,L])$ be symmetric, satisfying $(**)$. By L'Hopital's rule, and the fact that $h|_{(-L,0)\cup (0,L)}\in C^{1}((-L,0)\cup (0,L))$, we have that $lim_{s\rightarrow 0}h'(s)=lim_{s\rightarrow 0}{h(s)-h(0)\over s}=h'_{+}(0)=0=h'_{-}(0)=lim_{s\rightarrow 0}{h(0)-h(-s)\over s}=lim_{s\rightarrow 0}h'(-s)$ and, similarly, $lim_{s\rightarrow 0}h'(L-s)=h'_{-}(L)$, $lim_{s\rightarrow 0}h'(-L+s)=h'_{-}(L)$. Hence, $h'\in C([-L,L])$, and $h'$ is symmetric by the fact that $h(x)=h(-x)$, and, therefore, $h'(x)=-h'(-x)$, for $x\in (-L,0)\cup (0,L)$, and, automatically, $h'(L)=h'(-L)=0$, $h'(0)=h'(-0)=0$, as these points are fixed.\\

\end{proof}

\begin{lemma}
\label{asymmetric}
Let $f\in C^{2}([0,L])$, such that $f(0)=f(L)=0$ and $f_{+}''(0)=f_{+}''(L)=0$, $(*)$,  then there exists $h\in C^{2}([-L,L])$, (with endpoints identified), such that $h|_{[0,L]}=f$, $h$ is asymmetric about $0$, and $h'$ is symmetric about $0$. Let $f\in C^{2}([0,L])$, such that $f(0)=f(L)=0$ and $f_{+}'(0)=f_{-}'(L)=0$, $(**)$, then there exists $h\in C^{2}([-L,L])$, (with endpoints identified), such that $h|_{[0,L]}=f$, $h$ is symmetric about $0$, and $h'$ is asymmetric about $0$.

\end{lemma}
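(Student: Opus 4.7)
The natural construction is to extend $f$ by (anti-)reflection through the origin. For part $(*)$, set
\[
h(x)=\begin{cases} f(x), & x\in[0,L],\\ -f(-x), & x\in[-L,0],\end{cases}
\]
and for part $(**)$, set $h(x)=f(x)$ on $[0,L]$ and $h(x)=f(-x)$ on $[-L,0]$. In each case $h$ manifestly has the required symmetry/asymmetry on $[-L,L]$, and continuity at the gluing point $x=0$ is immediate from $f(0)=0$ (in the asymmetric case) or from the formula agreeing on the overlap (in the symmetric case). Continuity at the identified endpoints $\pm L$ follows from $f(L)=0$ in the asymmetric case and is automatic in the symmetric case.

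The idea is then to feed $h$ into Lemma \ref{symasym2} twice, once to control $h'$ and once to control $h''$. For part $(*)$, $h$ is asymmetric and $h|_{(-L,0)\cup(0,L)}\in C^{1}$ because it is a piecewise reflection of $f\in C^{2}([0,L])$; the first half of Lemma \ref{symasym2} then gives $h'\in C([-L,L])$ and $h'$ symmetric. To bootstrap to $C^{2}$, I apply the second half of Lemma \ref{symasym2} to the symmetric function $h'$: the required boundary conditions $(h')'_{+}(-L)=(h')'_{-}(L)=0$ and $(h')'_{+}(0)=(h')'_{-}(0)=0$ follow from the hypothesis $f''_{+}(0)=f''_{-}(L)=0$ together with the fact that, on $(-L,0)$, differentiating $h(x)=-f(-x)$ twice gives $h''(x)=-f''(-x)$, so the one-sided second derivatives at $0$ and at the identified endpoints match in the way the lemma requires. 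This yields $h''\in C([-L,L])$ (asymmetric), hence $h\in C^{2}([-L,L])$.

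For part $(**)$, the roles reverse. Now $h$ is symmetric and the hypothesis $f'_{+}(0)=f'_{-}(L)=0$, combined with the relation $h'(x)=-f'(-x)$ on $(-L,0)$, supplies the four vanishing boundary conditions needed to apply the second half of Lemma \ref{symasym2} to $h$; this produces $h'\in C([-L,L])$, asymmetric. Then the first half of Lemma \ref{symasym2}, applied to the asymmetric $h'$, delivers $h''\in C([-L,L])$, symmetric, and so $h\in C^{2}([-L,L])$.

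I do not expect any serious obstacle; the whole argument is a bookkeeping exercise in one-sided limits at $0$ and at the identified endpoints $\pm L$. The only point that needs care, and the reason the two sets of hypotheses $(*)$ and $(**)$ look asymmetric, is that the reflection interchanges the parity of the derivatives: reflecting $f$ to an asymmetric extension forces the \emph{second} derivatives to vanish at $0$ and $L$ (so that $h''$ is continuous across the gluing points), whereas reflecting to a symmetric extension forces the \emph{first} derivatives to vanish there. Matching these vanishing conditions to the two halves of Lemma \ref{symasym2} is the only substantive step.
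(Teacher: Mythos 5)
Your proposal is correct and follows essentially the same route as the paper: the identical reflection/anti-reflection construction of $h$, followed by two successive applications of Lemma \ref{symasym2} (first half then second half for $(*)$, in the reverse order for $(**)$), with the hypotheses on $f''$ resp.\ $f'$ supplying exactly the vanishing one-sided derivatives needed at $0$ and $\pm L$. The paper simply writes out the one-sided limit computations that you describe as bookkeeping; nothing of substance differs.
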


\begin{proof}
Suppose that $f$ satisfies $(*)$ and let $h(x)=f(x)$, for $x\in [0,L]$, and $h(x)=-f(-x)$, for $x\in [-L,0)$. Then clearly $h$ is asymmetric about $0$, $h(0)=h(L)=h(-L)=0$, and $h\in C([-L,L])$. Moreover, $h|_{(-L,0)\cup (0,L)}\in C^{1}((-L,0)\cup (0,L))$, as $f\in C^{1}([0,L])$. By Lemma \ref{symasym2}, we have that $h'\in C([-L,L])$, and $h'$ is symmetric. Moreover, $h'|_{(-L,0)\cup (0,L)}\in C^{1}((-L,0)\cup (0,L))$, as $f\in C^{2}([0,L])$ and $f'\in C^{1}([0,L])$. We have that;\\

$(h')'_{+}(-L)=lim_{s\rightarrow 0^{+}}{h'(-L+s)-h'(-L)\over s}$\\

$=lim_{s\rightarrow 0}{h'(L-s)-h'(L)\over s}$ (by asymmetry)\\

$=-lim_{s\rightarrow 0}{h'(L)-h'(L-s)\over s}$\\

$=-lim_{s\rightarrow 0}{f'(L)-f'(L-s)\over s}=-f_{+}''(L)=0$\\

and;\\

$(h')'_{-}(L)=lim_{s\rightarrow 0^{+}}{h'(L)-h'(L-s)\over s}$\\

$=lim_{s\rightarrow 0}{h'_{-}(L)-f'(L-s)\over s}$\\

$=lim_{s\rightarrow 0}{f'_{-}(L)-f'(L-s)\over s}$\\

$=lim_{s\rightarrow 0}{f'(L)-f'(L-s)\over s}=f_{+}''(L)=0$\\

Similarly, $(h')'_{+}(0)=f_{+}''(0)=0$, $(h')'_{-}(0)=-f_{+}''(0)=0$\\

Applying Lemma \ref{symasym2} again, we obtain that $(h')'\in C[-L,L]$, hence $h\in C^{2}([-L,L])$.\\

Suppose that $f$ satisfies $(**)$ and let $h(x)=f(x)$, for $x\in [0,L]$, $h(x)=f(-x)$, for $x\in [-L,0)$. Then $h$ is symmetric and $h|_{(-L,0)\cup (0,L)}\in C^{1}((-L,0)\cup (0,L))$. Moreover;\\

$h'_{+}(-L)=lim_{s \rightarrow 0}{h(-L+s)-h(-L)\over s}$\\

$=lim_{s\rightarrow 0}{h(L-s)-h(L)\over s}$\\

$=lim_{s\rightarrow 0}{f(L-s)-f(L)\over s}$\\

$=-lim_{s\rightarrow 0}{f(L)-f(L-s)\over s}$\\

$=-f'_{-}(L)=0$\\

Similarly, $h'_{-}(L)=f'_{-}(L)=0$, $h'_{+}(0)=f'_{+}(0)=0$, and $h'_{-}(0)=-f'_{+}(0)=0$. Again, applying Lemma \ref{symasym2}, we obtain that $h'\in C([-L,L])$ is asymmetric. We have that $h'|_{(-L,0)\cup (0,L)}\in C^{1}((-L,0)\cup (0,L))$, as $f\in C^{2}([0,L])$ and $f'\in C^{1}([0,L])$. Applying Lemma \ref{symasym2}, we obtain that $(h')'\in C[-L,L]$, hence $h\in C^{2}([-L,L])$.\\
\end{proof}

\begin{lemma}
\label{asymmetric'}
Let $f\in C^{4}([0,L])$, such that $f(0)=f(L)=0$ and $f_{+}^{(2)}(0)=f_{+}^{(2)}(L)=0$, $f_{+}^{(4)}(0)=f_{+}^{(4)}(L)=0$, $(*)$,  then there exists $h\in C^{4}([-L,L])$, (with endpoints identified), such that $h|_{[0,L]}=f$, $\{h,h^{(2)}\}$ are asymmetric about $0$, and $\{h^{(1)},h^{(3)}\}$ are symmetric about $0$. Let $f\in C^{4}([0,L])$, such that $f(0)=f(L)=0$ and $f_{+}^{(1)}(0)=f_{+}^{(1)}(L)=0$, $f_{+}^{(3)}(0)=f_{+}^{(3)}(L)=0$, $(**)$, then there exists $h\in C^{4}([-L,L])$, (with endpoints identified), such that $h|_{[0,L]}=f$, $\{h,h^{(2)}\}$ are symmetric about $0$, and $\{h^{(1)},h^{(3)}\}$ are asymmetric about $0$.
\end{lemma}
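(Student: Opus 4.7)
The plan is to iterate Lemma \ref{asymmetric}, mirroring the way Lemma \ref{asymmetric} itself iterates Lemma \ref{symasym2}. In both parts I would take $h$ to be the obvious extension: $h(x)=-f(-x)$ on $[-L,0)$ for part $(*)$, and $h(x)=f(-x)$ on $[-L,0)$ for part $(**)$.

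For part $(*)$, I would first apply Lemma \ref{asymmetric}, case $(*)$, directly to $f$, which satisfies $f(0)=f(L)=0$ and $f^{(2)}_{+}(0)=f^{(2)}_{+}(L)=0$; this yields $h\in C^{2}([-L,L])$ asymmetric with $h^{(1)}$ symmetric. I would then apply the same case of the same lemma to $f^{(2)}\in C^{2}([0,L])$, whose hypotheses $f^{(2)}(0)=f^{(2)}(L)=0$ and $(f^{(2)})^{(2)}_{+}(0)=f^{(4)}_{+}(0)=0$, $(f^{(2)})^{(2)}_{+}(L)=f^{(4)}_{+}(L)=0$ are given. This produces $k\in C^{2}([-L,L])$ asymmetric, with $k|_{[0,L]}=f^{(2)}$ and $k^{(1)}$ symmetric. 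Direct differentiation of $-f(-x)$ on $(-L,0)$ gives $h^{(2)}(x)=-f^{(2)}(-x)=k(x)$ on $(-L,0)\cup(0,L)$, so continuity forces $h^{(2)}=k$ throughout $[-L,L]$. Hence $h^{(2)}\in C^{2}([-L,L])$, i.e.\ $h\in C^{4}([-L,L])$, with $h^{(2)}$ asymmetric and $h^{(3)}=k^{(1)}$ symmetric.

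For part $(**)$, applying Lemma \ref{asymmetric}, case $(**)$, to $f$ gives $h\in C^{2}([-L,L])$ symmetric with $h^{(1)}$ asymmetric. Then applying Lemma \ref{asymmetric}, case $(*)$, to $f^{(1)}\in C^{3}([0,L])\subset C^{2}([0,L])$, whose hypotheses $f^{(1)}(0)=f^{(1)}(L)=0$ and $(f^{(1)})^{(2)}_{+}(0)=f^{(3)}_{+}(0)=0$, $(f^{(1)})^{(2)}_{+}(L)=f^{(3)}_{+}(L)=0$ are exactly $(**)$, produces $m\in C^{2}([-L,L])$ asymmetric, with $m|_{[0,L]}=f^{(1)}$ and $m^{(1)}$ symmetric. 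The identification $m=h^{(1)}$ again follows from agreement on the open pieces and continuity, so $h^{(1)}\in C^{2}([-L,L])$, i.e.\ $h\in C^{3}([-L,L])$, with $h^{(2)}$ symmetric and $h^{(3)}$ asymmetric.

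The step I expect to be the main obstacle is the upgrade from $C^{3}$ to $C^{4}$ in part $(**)$, because the hypotheses $(**)$ impose no condition on $f^{(2)}$ at the endpoints, so a third application of Lemma \ref{asymmetric} (to $f^{(2)}$) is unavailable. Instead I would argue directly: on $(0,L)$, $h^{(4)}=f^{(4)}$, while on $(-L,0)$, $h^{(4)}(x)=f^{(4)}(-x)$, so the one-sided limits at $0$ and at the identified endpoint $\pm L$ automatically agree, giving $h^{(4)}$ a continuous extension to $[-L,L]$. The L'Hopital argument quoted in the footnote of Definition \ref{wave} then certifies that this extension is the genuine fourth derivative of $h$ at $0$ and at $\pm L$, and symmetry of $h^{(4)}$ is inherited from the even extension formula, completing the proof.
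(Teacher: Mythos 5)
Your first part is exactly the paper's argument: extend $f$ oddly, observe that $f^{(2)}$ satisfies the hypotheses of case $(*)$ of Lemma \ref{asymmetric}, and identify the resulting odd extension with $h^{(2)}$ on the open pieces and hence everywhere, giving $h\in C^{4}([-L,L])$ with the stated parities. For the second part the paper says only that it ``follows the same strategy'', which, read literally, would mean applying case $(**)$ of Lemma \ref{asymmetric} to $f^{(2)}$; as you correctly note, the hypotheses $(**)$ give no control on $f^{(2)}$ at the endpoints, so that case is not literally applicable (though its proof never actually uses the vanishing condition $f(0)=f(L)=0$, so the paper's route can be repaired by observing that this hypothesis is superfluous in case $(**)$). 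Your alternative --- apply case $(*)$ to $f^{(1)}$, which does satisfy its hypotheses under $(**)$, identify the result with $h^{(1)}$ to get $h\in C^{3}$ with $h^{(2)}$ symmetric and $h^{(3)}$ asymmetric, and then upgrade to $C^{4}$ by matching the one-sided limits of the even extension of $f^{(4)}$ and invoking the L'Hopital/mean-value argument already used in Lemma \ref{symasym2} --- is correct, and arguably more honest about which hypotheses are being used where. The only costs are that you must separately justify the asymmetry of $h^{(3)}=m^{(2)}$ (immediate from Lemma \ref{symasym2} applied to the symmetric function $m^{(1)}$, or from the explicit odd-extension formula for $f^{(3)}$) and carry out the final differentiability step at $0$ and at the identified endpoints by hand; both are routine and in the same style as the rest of the paper.
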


\begin{proof}
For the first part, let $h$ be defined as in \ref{asymmetric}, then $h\in C^{2}([-L,L])$, (with endpoints identified), $h|_{[0,L]}=f$, $h$ is asymmetric about $0$ and $h^{(1)}$ is symmetric about $0$. We have that $f^{(2)}\in C^{2}([0,L])$, $f_{+}^{(2)}(0)=f_{+}^{(2)}(L)=0$, and $f_{+}^{(4)}(0)=f_{+}^{(4)}(L)=0$, so $f^{(2)}$ satisfies the hypotheses of Lemma \ref{asymmetric}. Moreover, $h^{(2)}(x)=f^{(2)}(x)$, for $x\in [0,L]$, and $h^{(2)}(-x)=-f^{(2)}(-x)$, for $x\in [-L,0)$. Then, by the result of \ref{asymmetric}, we have that $h^{(2)}\in C^{2}([-L,L])$, (with endpoints identified), $h^{(2)}$ is asymmetric about $0$ and $h^{(3)}$ is symmetric about $0$. Hence $h\in C^{4}([-L,L])$, and the remaining claims are clear. The proof of the second part of the lemma follows the same strategy.

\end{proof}

\begin{lemma}
\label{decomp}
Let $f\in C^{\infty}_{0}([0,L])$,  then there exists $\{f_{1},f_{2}\}\subset C^{\infty}_{0}([0,L])$, with $f'_{1,+}(0)=f'_{1,+}(L)=0$, $f''_{2,+}(0)=f''_{2,+}(L)=0$, such that $f=f_{1}+f_{2}$.

\end{lemma}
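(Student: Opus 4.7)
The plan is to construct $f_2$ explicitly as a two-term sine combination tuned to the first-derivative data of $f$ at the endpoints, and then take $f_1 := f - f_2$. Since sines of the form $\sin(k\pi x/L)$ are natural candidates in the Fourier-series context of the paper, this keeps the construction in-theme and avoids introducing ad hoc polynomials.

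Write $a := f_+'(0)$ and $b := f_-'(L)$; both exist and are finite since $f\in C^{\infty}_{0}([0,L])$. The key observation is that, for each integer $k\geq 1$, the function $s_k(x):=\sin(k\pi x/L)$ lies in $C^{\infty}_{0}([0,L])$ and, because $s_k''(x)=-(k\pi/L)^2 s_k(x)$, satisfies $s_k''(0)=s_k''(L)=0$. Hence any finite linear combination of such sines automatically belongs to $C^{\infty}_{0}([0,L])$ and has vanishing second derivative at both endpoints.

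Seek $f_2$ of the form $f_2(x)=A s_1(x)+B s_2(x)$. Direct differentiation gives $f_2'(0)=(A+2B)\pi/L$ and $f_2'(L)=(-A+2B)\pi/L$. Setting these equal to $a$ and $b$ produces a non-singular $2\times 2$ linear system whose unique solution is $A=(a-b)L/(2\pi)$ and $B=(a+b)L/(4\pi)$. With these constants, $f_2\in C^{\infty}_{0}([0,L])$ and satisfies $f_2(0)=f_2(L)=0$, $f_2''(0)=f_2''(L)=0$, $f_2'(0)=a$, $f_2'(L)=b$.

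Finally set $f_1:=f-f_2$. Then $f_1\in C^{\infty}_{0}([0,L])$ as a difference of two elements of that space, and $f_{1,+}'(0)=a-a=0$, $f_{1,-}'(L)=b-b=0$, so $f=f_1+f_2$ is the required decomposition. There is no substantive obstacle to overcome here: the only thing to check is the invertibility of the $2\times 2$ system for $(A,B)$, which is immediate. The lemma is essentially a bookkeeping reduction whose purpose is to split $f$ into one summand that can be extended symmetrically and another that can be extended antisymmetrically via Lemma~\ref{asymmetric}.
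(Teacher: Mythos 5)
Your proof is correct, but it takes a genuinely different route from the paper's. The paper works in the six-dimensional space of polynomials of degree at most $5$ and, via a rank--nullity and determinant computation ($\det A = 2L^{3} \neq 0$), produces the unique quintic $f_{1}$ satisfying the six conditions $f_{1}(0)=f_{1}(L)=f_{1}'(0)=f_{1}'(L)=0$, $f_{1}''(0)=f''_{+}(0)$, $f_{1}''(L)=f''_{-}(L)$, and then sets $f_{2}=f-f_{1}$. You instead build the \emph{other} summand explicitly, as $f_{2}=A\sin(\pi x/L)+B\sin(2\pi x/L)$, exploiting the fact that $s_{k}''=-(k\pi/L)^{2}s_{k}$ makes four of the six interpolation conditions (vanishing of the function and of its second derivative at both endpoints) automatic, so that only a $2\times 2$ system in $(A,B)$ remains; your computation of $A=(a-b)L/(2\pi)$, $B=(a+b)L/(4\pi)$ checks out, and $f_{1}:=f-f_{2}$ then has $f_{1,+}'(0)=f_{1,-}'(L)=0$ as required. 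Both decompositions are valid (the lemma does not assert uniqueness of the splitting). What your approach buys is economy and thematic fit: a two-dimensional trigonometric ansatz replaces a six-dimensional polynomial interpolation, and the invertibility check is immediate rather than a $3\times 3$ determinant. What the paper's approach buys is a template that it reuses verbatim for the higher-order analogue (Lemma \ref{decomp'}), where the polynomial space of degree $4n+1$ and the same rank--nullity argument handle $2(2n+1)$ conditions at once; your sine basis would also generalize (all even derivatives of $\sin(k\pi x/L)$ vanish at $0$ and $L$), but you would then need to verify the invertibility of an $2n\times 2n$ cosine-moment matrix, which is no easier than the paper's determinant.
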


\begin{proof}
Consider the equations $g(0)=g(L)=0$, $g'(0)=g'(L)=0$, $g''(0)=f''_{+}(0)$ and $g''(L)=f''_{-}(L)$, $(*)$ on the space $V_{6}=\{g\in \mathcal{R}[x]:deg(g)=5\}$. Let $T:V_{6}\rightarrow\mathcal{R}^{6}$ be given by;\\

$T(g)=(g(0),g(L),g'(0),g'(L),g''(0),g''(L))$\\

We have that $Ker(T)=0$, as if $T(g)=0$, then, clearly $g(x)=dx^{3}+ex^{4}+fx^{5}$, with $\{d,e,f\}\subset\mathcal{R}$, then, $g'(x)=3dx^{2}+4ex^{3}+5fx^{4}$, $g''(x)=6dx+12ex^{2}+20fx^{3}$, and we have that $g(L)=g'(L)=g''(L)=0$, iff;\\

$A \centerdot \begin{pmatrix} d\\ e\\ f\\ \end{pmatrix} = \begin{pmatrix} 0\\ 0\\ 0\\ \end{pmatrix}, A=\begin{pmatrix} 1 & L & L^{2}\\ 3 & 4L & 5L^{2}\\ 6 & 12L & 20L^{2}\\ \end{pmatrix}$\\

We have that $det(A)=2L^{3}\neq 0$, hence, $d=e=f=0$, as required. Then, $T$ is onto, by the rank-nullity theorem, hence, we can find a solution to $(*)$, corresponding to $T(g)=v_{1}$, where $v_{1}=(0,0,0,0,f''(0),f''(L))$. Let $f_{1}$ be the unique polynomial in $V_{5}$, satisfying these conditions, and let $f_{2}=f-f_{1}$. It is now a simple calculation to see that $\{f_{1},f_{2}\}$ satisfy the required conditions.
\end{proof}

\begin{lemma}
\label{decomp'}
Let $f\in C^{\infty}_{0}([0,L])$, and $n\in\mathcal{Z}_{\geq 1}$, then there exists $\{f_{1},f_{2}\}\subset C^{\infty}_{0}([0,L])$, with $f^{(2j-1)}_{1,+}(0)=f^{(2j-1)}_{1,-}(L)=0$, $f^{(2j)}_{1,+}(0)=f^{(2j)}_{1,-}(L)=0$, for $1\leq j\leq n$, such that $f=f_{1}+f_{2}$.
\end{lemma}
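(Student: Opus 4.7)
The plan is to extend the strategy of Lemma \ref{decomp} from $n=1$ to arbitrary $n$: in that proof three pieces of boundary data were matched at each endpoint using a polynomial of degree at most $5$, and here I will match $2n+1$ pieces at each endpoint using a polynomial of degree at most $4n+1$. Concretely, let $V_{4n+2}=\{g\in\mathcal{R}[x]:\deg(g)\leq 4n+1\}$, which has dimension $4n+2$, and define the linear map $T:V_{4n+2}\to\mathcal{R}^{4n+2}$ by
$$T(g)=(g(0),\,g(L),\,g'(0),\,g'(L),\,\ldots,\,g^{(2n)}(0),\,g^{(2n)}(L)).$$

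The key step, replacing the explicit $3\times 3$ determinant computation of Lemma \ref{decomp}, is to show $\ker(T)=0$. If $T(g)=0$, then $g$ and its first $2n$ derivatives vanish at both $0$ and $L$, so $g$ has a zero of order at least $2n+1$ at each of these points; therefore $x^{2n+1}(x-L)^{2n+1}$ divides $g(x)$. Since $\deg g\leq 4n+1<2(2n+1)$, this forces $g=0$. Because $\dim V_{4n+2}=4n+2=\dim\mathcal{R}^{4n+2}$, the rank-nullity theorem then gives that $T$ is a linear isomorphism.

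Finally, set
$$v=(0,\,0,\,f^{(1)}_{+}(0),\,f^{(1)}_{-}(L),\,f^{(2)}_{+}(0),\,f^{(2)}_{-}(L),\,\ldots,\,f^{(2n)}_{+}(0),\,f^{(2n)}_{-}(L))\in\mathcal{R}^{4n+2},$$
let $f_{2}\in V_{4n+2}$ be the unique polynomial with $T(f_{2})=v$, and put $f_{1}=f-f_{2}$. Then $f_{2}$ is a polynomial with $f_{2}(0)=f_{2}(L)=0$, so $f_{2}\in C^{\infty}_{0}([0,L])$, and hence $f_{1}\in C^{\infty}_{0}([0,L])$ as well; by construction $f^{(j)}_{1,+}(0)=f^{(j)}_{+}(0)-f^{(j)}_{2,+}(0)=0$ and similarly $f^{(j)}_{1,-}(L)=0$ for all $1\leq j\leq 2n$, which yields both families of conditions $f^{(2j-1)}_{1,+}(0)=f^{(2j-1)}_{1,-}(L)=0$ and $f^{(2j)}_{1,+}(0)=f^{(2j)}_{1,-}(L)=0$ for $1\leq j\leq n$. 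The only non-routine point is the kernel computation; it is the natural generalisation of the $\det A=2L^{3}\neq 0$ identity used in Lemma \ref{decomp}, and once it is in place the rest is bookkeeping exactly as in the $n=1$ case.
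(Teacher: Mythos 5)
Your overall strategy is the paper's: interpolate boundary data by a polynomial of degree at most $4n+1$ via a linear map $T$ onto $\mathcal{R}^{4n+2}$, prove $\ker(T)=0$, and invoke rank--nullity. Your kernel argument is in fact an improvement: the paper reduces to a $(2n+1)\times(2n+1)$ matrix whose determinant it asserts to be $cL^{n(2n-1)}\neq 0$ with the parenthetical ``(work out $c$)'' left unresolved, whereas your observation that $x^{2n+1}(x-L)^{2n+1}$ would have to divide a polynomial of degree at most $4n+1$ settles the injectivity cleanly and completely.

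There is, however, a substantive mismatch in the data you feed to $T$. As printed, the lemma places both families of conditions on $f_{1}$, and under that literal reading your construction (and indeed the trivial choice $f_{1}=0$, $f_{2}=f$) succeeds. But the statement is evidently mistyped: comparing with Lemma \ref{decomp}, with the parallel Lemma \ref{decomp2'}, and with the way Lemma \ref{extension'} invokes this result, the intended conclusion is $f^{(2j-1)}_{1,+}(0)=f^{(2j-1)}_{1,-}(L)=0$ together with $f^{(2j)}_{2,+}(0)=f^{(2j)}_{2,-}(L)=0$, i.e.\ the odd-order conditions fall on $f_{1}$ and the even-order conditions on $f_{2}$. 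Your $f_{2}$ is the polynomial matching \emph{all} of $f$'s derivatives of orders $1$ through $2n$ at the endpoints, so its even-order derivatives there equal those of $f$ and need not vanish; the resulting decomposition therefore cannot be passed to Lemma \ref{asymmetric'} as the paper requires. The repair is immediate with your machinery: take the data vector to be $0$ in the slots of odd order and $f^{(2j)}_{+}(0)$, $f^{(2j)}_{-}(L)$ in the slots of even order (as the paper does), call the resulting polynomial $f_{1}$, and set $f_{2}=f-f_{1}$; then $f_{1}$ has vanishing odd-order derivatives and $f_{2}$ vanishing even-order derivatives at both endpoints.
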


\begin{proof}
Consider the equations $g(0)=g(L)=0$, $g^{(2j-1)}(0)=g^{(2j-1)}(L)=0$, and $g^{(2j)}(0)=f^{(2j)}_{+}(0)$, $g^{(2j)}(L)=f^{(2j)}_{-}(L)$, for $1\leq j\leq n$, $(*)$, on the space $V_{2(2n+1)}=\{g\in \mathcal{R}[x]:deg(g)=4n+1\}$. Let $T:V_{2(2n+1)}\rightarrow\mathcal{R}^{2(2n+1)}$ be given by;\\

$(T(g))_{1}=g(0)$\\

$(T(g))_{2}=g(L)$\\

$(T(g))_{1+2j}=g^{(j)}(0)$\\

$(T(g))_{2+2j}=g^{((j))}(L)$ ($1\leq j\leq 2n$)\\

We have that $Ker(T)=0$, as if $T(g)=0$, then, using the fact that $g(0)=0$, $g^{(j)}(0)=0$, for $1\leq j\leq 2n$, we have  $g(x)=\sum_{i=2n+1}^{4n+1}a_{i}x^{i}$, with $a_{i}\in\mathcal{R}$, for $2n+1\leq i\leq 4n+1$. Then, for $1\leq j\leq 2n$;\\

$g^{(j)}(x)=\sum_{i=2n+1}^{4n+1}{i!\over (i-j)!}a_{i}x^{i-j}$\\

and we have that $g(L)=0$, $g^{(j)}(L)=0$, for $1\leq j\leq 2n$ iff;\\

$A \centerdot \begin{pmatrix} a_{2n+1}\\ \centerdot\\ \centerdot\\ a_{2n+i}\\ \centerdot\\ \centerdot\\ a_{4n+1}\\  \end{pmatrix} = \begin{pmatrix} 0\\ \centerdot\\ \centerdot\\ 0\\ \centerdot\\ \centerdot\\ 0\\ \end{pmatrix}, A=\begin{pmatrix} 1 & \centerdot & \centerdot & L^{i-1} &  \centerdot & \centerdot & L^{2n-1}\\ \centerdot & \centerdot & \centerdot & \centerdot & \centerdot & \centerdot & \centerdot\\ \centerdot & \centerdot & \centerdot & \centerdot & \centerdot & \centerdot & \centerdot\\ {(2n+1)!\over (2n+2-j)!} & \centerdot & \centerdot & {(2n+i)!L^{i-1}\over(2n+1+i-j)!} &  \centerdot & \centerdot & {(4n+1)!L^{2n-1}\over(4n+1-j)!}\\ \centerdot & \centerdot & \centerdot & \centerdot & \centerdot & \centerdot & \centerdot\\ \centerdot & \centerdot & \centerdot & \centerdot & \centerdot & \centerdot & \centerdot\\ {(2n+1)!\over 2!} & \centerdot & \centerdot & {(2n+i)!L^{i-1}\over(i+1)!} &  \centerdot & \centerdot & {(4n)!L^{2n-1}\over(2n+1)!}\\ \end{pmatrix}$\\

for $1\leq i,j\leq 2n$.\\

We have that $det(A)=cL^{n(2n-1)}\neq 0$, (work out $c$) hence, $a_{i}=0$, for $2n+1\leq i\leq 4n+1$, as required. Then, $T$ is onto, by the rank-nullity theorem, hence, we can find a solution to $(*)$, corresponding to $T(g)=v_{1}$, where;\\

 $(v_{1})_{j}=0$, $1\leq j\leq 2$\\

 $(v_{1})_{j}=0$, ($j=4k-1$, $j=4k$, $1\leq k\leq n$)\\

 $(v_{1})_{j}=f_{-}^{(2k)}(0)$, ($j=4k+1$, $1\leq k\leq n$)\\

 $(v_{1})_{j}=f_{-}^{(2k)}(L)$, ($j=4k+2$, $1\leq k\leq n$)\\

Let $f_{1}$ be the unique polynomial in $V_{2(2n+1)}$, satisfying these conditions, and let $f_{2}=f-f_{1}$. It is now a simple calculation to see that $\{f_{1},f_{2}\}$ satisfy the required conditions.
\end{proof}

\begin{lemma}
\label{extension}
Let $f\in C^{\infty}_{0}([0,L])$, then, for all $\epsilon>0$, there exists $g\in C^{2}([-L,L])$, such that;\\

$g|_{[\epsilon,L-\epsilon)}=f|_{[\epsilon,L-\epsilon)}$.\\

\end{lemma}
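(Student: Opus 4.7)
The plan is to construct $g$ by multiplying $f$ by a smooth cutoff supported inside $(0,L)$ and then extending by zero to $[-L,0)$. Since the required agreement $g|_{[\epsilon,L-\epsilon)} = f|_{[\epsilon,L-\epsilon)}$ imposes no constraint on $[0,\epsilon)$ or $(L-\epsilon,L]$, we are free to replace $f$ by zero in a neighborhood of each endpoint, which removes any regularity issue at $0$ and at $L$ without needing to invoke the finer extension results \ref{asymmetric} or \ref{asymmetric'}.

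Concretely, I would first fix a $C^{\infty}$ cutoff function $\psi:\mathcal{R}\rightarrow [0,1]$, built in the standard way from the bump $\rho(x)=e^{-1/x^{2}}$ for $x>0$ and $\rho(x)=0$ for $x\leq 0$, such that $\psi\equiv 1$ on $[\epsilon,L-\epsilon]$ and $\psi\equiv 0$ outside $(\epsilon/2, L-\epsilon/2)$. I would then define
\[
g(x)=\psi(x)f(x)\ \text{ for }\ x\in [0,L],\quad g(x)=0\ \text{ for }\ x\in [-L,0).
\]

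To verify $g\in C^{2}([-L,L])$ (in fact $C^{\infty}$), I would cover $[-L,L]$ by three overlapping pieces. On $[-L,\epsilon/2]$ the function $g$ is identically zero, since $\psi$ vanishes on $[0,\epsilon/2]$ and $g\equiv 0$ on $[-L,0)$, so $g$ is $C^{\infty}$ there. On $[L-\epsilon/2,L]$ the function $g$ is again identically zero because $\psi$ vanishes on $[L-\epsilon/2,L]$. On the open interval $(\epsilon/2,L-\epsilon/2)\subset (0,L)$, $g=\psi f$ is the product of two $C^{\infty}$ functions and hence $C^{\infty}$, using $f|_{(0,L)}\in C^{\infty}(0,L)$. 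These definitions are consistent on overlaps, so $g\in C^{\infty}([-L,L])\subset C^{2}([-L,L])$. Finally, on $[\epsilon,L-\epsilon]$ we have $\psi\equiv 1$, so $g=f$ there, which is strictly stronger than the desired equality on $[\epsilon,L-\epsilon)$.

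There is no real obstacle; the only external input is the standard construction of the cutoff $\psi$, and the rest is verification by cases. The point of stating the result in this weak form (with the slack region $[0,\epsilon)\cup (L-\epsilon,L]$) is precisely to allow this cutoff device, bypassing the need to match derivatives of $f$ at the endpoints.
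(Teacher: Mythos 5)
Your proof is correct for the statement as literally quantified, but it takes a genuinely different route from the paper. The paper does not use a cutoff at all: it invokes Lemma \ref{decomp} to write $f=f_{1}+f_{2}$ with $f'_{1,+}(0)=f'_{1,-}(L)=0$ and $f''_{2,+}(0)=f''_{2,-}(L)=0$, then applies Lemma \ref{asymmetric} to extend $f_{1}$ symmetrically and $f_{2}$ asymmetrically to $C^{2}$ functions on $[-L,L]$ (with endpoints identified), and sets $g=g_{1}+g_{2}$. That construction produces a single $g$, independent of $\epsilon$, which in fact agrees with $f$ on all of $[0,L]$, and --- more importantly --- comes equipped with the symmetric/asymmetric decomposition that the paper exploits later (Lemmas \ref{extension2}, \ref{extension2'} and the Fourier-coefficient estimates in Lemma \ref{limit} all depend on $g_{1}$, $g_{2}$ having definite parity). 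Your cutoff argument is more elementary and even yields $g\in C^{\infty}([-L,L])$, and since $g$ vanishes identically near $0$, $\pm L$ it is smooth on the circle as well; the minor points worth adding are the trivial case $\epsilon\geq L/2$ (where $[\epsilon,L-\epsilon)$ is empty) and the observation that your $g$ necessarily depends on $\epsilon$, which is permitted by the stated quantifier order ``for all $\epsilon$ there exists $g$'' but means your $g$ could not be substituted into the paper's subsequent lemmas, where the quantifiers are reversed and the parity structure is essential. In short: correct as a proof of this lemma in isolation, but it deliberately discards exactly the structure the paper's version of the lemma is designed to carry forward.
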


\begin{proof}
By Lemma \ref{decomp}, we can find $\{f_{1},f_{2}\}\subset C^{\infty}_{0}([0,L])$, with $f'_{1,+}(0)=f'_{1,+}(L)=0$, $f''_{2,+}(0)=f''_{2,+}(L)=0$, such that $f=f_{1}+f_{2}$. By Lemma \ref{asymmetric}, we can find $\{g_{1},g_{2}\}\subset C^{2}_{0}([0,L])$, with $g_{1}|_{[0,L]}=f_{1}$, $g_{2}|_{[0,L]}=f_{2}$ and $g_{1}$ symmetric, $g_{2}$ asymmetric. Let $g=g_{1}+g_{2}$, then $g\in C^{2}_{0}([0,L])$, and $g|_{[\epsilon,L-\epsilon)}=f|_{[\epsilon,L-\epsilon)}$.
\end{proof}

\begin{lemma}
\label{extension'}
Let $f\in C^{\infty}_{0}([0,L])$, then, for all $\epsilon>0$, there exists $g\in C^{4}([-L,L])$, such that;\\

$g|_{[\epsilon,L-\epsilon)}=f|_{[\epsilon,L-\epsilon)}$.\\

\end{lemma}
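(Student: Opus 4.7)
The plan is to mirror the proof of Lemma \ref{extension}, but upgrading every ingredient from $C^2$ to $C^4$: I will replace Lemma \ref{decomp} with Lemma \ref{decomp'} (taking $n=2$) and replace Lemma \ref{asymmetric} with Lemma \ref{asymmetric'}. The $\epsilon$ in the statement is actually a red herring — I will produce a $g\in C^4([-L,L])$ which agrees with $f$ on all of $[0,L]$, from which the restriction to $[\epsilon,L-\epsilon)$ is immediate.

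First, I would invoke Lemma \ref{decomp'} with $n=2$ applied to $f\in C^{\infty}_{0}([0,L])$. Unpacking the construction (as in Lemma \ref{decomp}), this produces $f_{1}\in C^{\infty}_{0}([0,L])$, a polynomial of degree $9$, which matches $f$ in even derivatives at the endpoints and whose odd derivatives of orders $1$ and $3$ vanish at $0$ and $L$; setting $f_{2}=f-f_{1}$, one gets $f_{2}\in C^{\infty}_{0}([0,L])$ with vanishing even derivatives of orders $2$ and $4$ at $0$ and $L$. Concretely, $f_{1,+}^{(1)}(0)=f_{1,-}^{(1)}(L)=0$ and $f_{1,+}^{(3)}(0)=f_{1,-}^{(3)}(L)=0$, while $f_{2,+}^{(2)}(0)=f_{2,-}^{(2)}(L)=0$ and $f_{2,+}^{(4)}(0)=f_{2,-}^{(4)}(L)=0$.

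Next, since $C^{\infty}_{0}([0,L])\subset C^{4}([0,L])$, I would apply the two halves of Lemma \ref{asymmetric'}. The function $f_{1}$ satisfies hypothesis $(**)$ of that lemma, so there is a symmetric $g_{1}\in C^{4}([-L,L])$ extending $f_{1}$; the function $f_{2}$ satisfies hypothesis $(*)$, so there is an asymmetric $g_{2}\in C^{4}([-L,L])$ extending $f_{2}$. Setting $g=g_{1}+g_{2}$, linearity of differentiation gives $g\in C^{4}([-L,L])$, and $g|_{[0,L]}=f_{1}+f_{2}=f$, so in particular $g|_{[\epsilon,L-\epsilon)}=f|_{[\epsilon,L-\epsilon)}$.

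The only step that requires care is bookkeeping of the endpoint conditions: one must check that the boundary data produced by Lemma \ref{decomp'} at $n=2$ are exactly the hypotheses $(*)$ and $(**)$ needed by Lemma \ref{asymmetric'}. This is not really an obstacle — it is the same matching that drove the proof of Lemma \ref{extension}, promoted one order of smoothness — but it is the step where the whole scheme stands or falls, and is the reason Lemma \ref{decomp'} was formulated with both odd and even vanishing rows in its linear system.
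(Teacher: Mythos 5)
Your proposal is correct and follows essentially the same route as the paper: decompose $f=f_{1}+f_{2}$ via Lemma \ref{decomp'} (with the odd derivatives of $f_{1}$ and the even derivatives of $f_{2}$ vanishing at the endpoints), extend each piece by Lemma \ref{asymmetric'} to a symmetric $g_{1}$ and an asymmetric $g_{2}$ in $C^{4}([-L,L])$, and set $g=g_{1}+g_{2}$. Your observation that $g$ in fact agrees with $f$ on all of $[0,L]$, so the $\epsilon$ plays no role, is also implicit in the paper's own argument.
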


\begin{proof}
By Lemma \ref{decomp'}, we can find $\{f_{1},f_{2}\}\subset C^{\infty}_{0}([0,L])$, with $f^{(1)}_{1,+}(0)=f^{(1)}_{1,-}(L)=0$, $f^{(3)}_{1,+}(0)=f^{(3)}_{1,-}(L)=0$,  $f^{(2)}_{2,+}(0)=f^{(2)}_{2,-}(L)=0$, $f^{(4)}_{2,+}(0)=f^{(4)}_{2,-}(L)=0$ such that $f=f_{1}+f_{2}$. By Lemma \ref{asymmetric'}, we can find $\{g_{1},g_{2}\}\subset C^{4}_{0}([-L,L])$, with $g_{1}|_{[0,L]}=f_{1}$, $g_{2}|_{[0,L]}=f_{2}$ and $g_{1}$ symmetric, $g_{2}$ asymmetric. Let $g=g_{1}+g_{2}$, then $g\in C^{4}_{0}([-L,L])$, and $g|_{[\epsilon,L-\epsilon)}=f|_{[\epsilon,L-\epsilon)}$.
\end{proof}

\begin{lemma}
\label{asymmetric2}
Let $F\in C^{2}([0,L]\times\mathcal{R})$, such that $F(0,t)=F(L,t)=0$, for all $t\in\mathcal{R}$, and let $F_{t,+}''(0)=F_{t,+}''(L)=0$, $(*)$,  then there exists $H\in C^{2}([-L,L]\times\mathcal{R})$, (with endfaces identified), such that $H|_{[0,L]\times\mathcal{R}}=F$, $H$ is asymmetric about $0$, and ${\partial H\over\partial x}$ is symmetric about $0$. Let $F\in C^{2}([0,L]\times\mathcal{R})$, such that $F(0)=F(L)=0$ and $F_{t,+}'(0)=f_{t,-}'(L)=0$, $(**)$, then there exists $H\in C^{2}([-L,L]\times\mathcal{R})$, (with endfaces identified), such that $H|_{[0,L]\times\mathcal{R}}=F$, $H$ is symmetric about $0$, and ${\partial H\over\partial x}$ is asymmetric about $0$.
\end{lemma}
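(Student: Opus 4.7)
The plan is to extend $F$ by reflection across the line $x=0$ in direct analogy with the construction in Lemma \ref{asymmetric}, and to upgrade the fiberwise regularity to joint regularity in $(x,t)$ using the continuous extensions of $\partial^{i} F/\partial x^{i}$ to $[0,L]\times\mathcal{R}$ built into the hypothesis $F\in C^{2}([0,L]\times\mathcal{R})$.

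Concretely, for case $(*)$ I would set $H(x,t)=F(x,t)$ on $[0,L]\times\mathcal{R}$ and $H(x,t)=-F(-x,t)$ on $[-L,0)\times\mathcal{R}$; for case $(**)$ the definition on the left half-strip is $H(x,t)=F(-x,t)$ instead. For each fixed $t\in\mathcal{R}$, the section $F(\cdot,t)$ lies in $C^{2}([0,L])$ and satisfies the hypotheses of Lemma \ref{asymmetric}, so that lemma delivers $H(\cdot,t)\in C^{2}([-L,L])$ with the required (anti)symmetries of $H$ and $\partial H/\partial x$. Continuity of $H$ on $[-L,L]\times\mathcal{R}$ is immediate on each closed half-strip as a composition of continuous maps, the pieces agree along $x=0$ since $F(0,t)=0$, and the identification of $x=\pm L$ is consistent because $F(L,t)=0$. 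Joint $C^{2}$ regularity on the open set $((-L,0)\cup(0,L))\times\mathcal{R}$ follows from that of $F$ on $(0,L)\times\mathcal{R}$ via the chain rule applied to the reflection $x\mapsto -x$.

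The essential step is to produce continuous extensions $r_{0},r_{1},r_{2}\in C([-L,L]\times\mathcal{R})$ of the $x$-partials of $H$ from the interior. Let $r_{i}^{F}\in C([0,L]\times\mathcal{R})$ denote the extensions of $\partial^{i} F/\partial x^{i}$ furnished by $F\in C^{2}([0,L]\times\mathcal{R})$. In case $(*)$ I would define $r_{i}(x,t)=r_{i}^{F}(x,t)$ for $x\geq 0$ and, for $x<0$, $r_{0}(x,t)=-r_{0}^{F}(-x,t)$, $r_{1}(x,t)=r_{1}^{F}(-x,t)$, $r_{2}(x,t)=-r_{2}^{F}(-x,t)$; in case $(**)$ the signs are reversed. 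Away from $x\in\{-L,0,L\}$ these agree with the $x$-partials of $H$ by the chain rule, and the boundary hypotheses are exactly what guarantees the matching at the three gluing loci: in case $(*)$, the requirement $r_{2}^{F}(0,t)=r_{2}^{F}(L,t)=0$ forces the odd-extended $r_{2}$ to vanish along $x=0$ and along the identified endfaces, while $r_{0}$ matches automatically using $F(0,t)=F(L,t)=0$ and $r_{1}$ matches because it is even-extended. In case $(**)$ the roles of $r_{1}$ and $r_{2}$ are interchanged.

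The principal obstacle is bookkeeping: one must carefully verify the three matching conditions for each function $r_{i}$ at $x=0$ and at the identified pair $x=\pm L$. This reduces, fiberwise in $t$, to exactly the computation performed in the proof of Lemma \ref{asymmetric}, and joint continuity in $t$ is inherited from continuity of the $r_{i}^{F}$ on $[0,L]\times\mathcal{R}$; no ideas beyond those of Lemma \ref{asymmetric} are required.
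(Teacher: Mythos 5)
Your proposal is correct and follows essentially the same route as the paper: odd (resp.\ even) reflection of $F$ across $x=0$, the fiberwise application of Lemma \ref{asymmetric} to each section $F(\cdot,t)$, and the gluing of the reflected continuous extensions $r_{i}$ of the $x$-partials (odd for $r_{0},r_{2}$, even for $r_{1}$ in case $(*)$) to certify joint $C^{2}$ regularity, with the boundary hypotheses supplying the matching at $x=0$ and at the identified endfaces. No substantive difference from the paper's argument.
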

\begin{proof}
Suppose that $F$ satisfies $(*)$ and let $H(x,t)=F(x,t)$, for $(x,t)\in [0,L]\times\mathcal{R}$, and $H(x,t)=-F(-x,t)$, for $(x,t)\in [-L,0)\times\mathcal{R}$, $(***)$. Using the result of Lemma \ref{asymmetric}, we have, for $t\in\mathcal{R}$, that $H_{t}\in C^{2}([-L,L])$, $(****)$, $H_{t}|_{[0,L]}=F_{t}$, $(*******)$, $H_{t}$ is asymmetric about $0$, $(\dag)$, and $H'_{t}$ is symmetric about $0$, $(\dag\dag)$. Let $r_{2}\in C([0,L]\times\mathcal{R})$ be given, as in Definition \ref{wave}, for $F$, so that $r_{2}|_{(0,L)\times\mathcal{R}}={\partial^{2}H\over\partial x^{2}}|_{(0,L)\times\mathcal{R}}$, $(******)$, and let $r_{2,l}\in C([-L,0]\times\mathcal{R})$ be given by $r_{2,l}(x,t)=-r_{2}(-x,t)$, for $(x,t)\in [-L,0]\times\mathcal{R}$, so that $r_{2,l}|_{(-L,0)\times\mathcal{R}}={\partial^{2}H\over\partial x^{2}}|_{(-L,0)\times\mathcal{R}}$, $(*****)$. Let $R_{2}$ be defined by $R_{2}(x,t)=r_{2}(x,t)$, if $(x,t)\in [0,L]\times\mathcal{R}$, and $R_{2}(x,t)=r_{2,l}(x,t)$, if $(x,t)\in [-L,0]\times\mathcal{R}$. Then $R_{2,t}|_{[-L,L]}=H_{t}$, hence, by $(****)$, in fact, $R_{2}\in C([-L,L]\times\mathcal{R})$, and, by $(***)$, $(*****)$, $(******)$, $R_{2}|_{((-L,0)\cup (0,L))\times\mathcal{R}}={\partial^{2}H\over\partial x^{2}}$. It follows that $H\in C^{2}([-L,L]\times\mathcal{R})$ (with endpoints identified). By $(*******)$, we obtain immediately that $H|_{[0,L]\times\mathcal{R}}=F$. The fact that $H$ is asymmetric about $0$, is obvious, from $(\dag)$. In order to see the final claim, let $r_{1}\in C([0,L]\times\mathcal{R})$ be given, as above, $r_{1,l}\in C([-L,0]\times\mathcal{R})$, be given by, $r_{1,l}(x,t)=r_{1}(-x,t)$, and $R_{1}\in C([0,L]\times\mathcal{R})$ be defined by $R_{1}(x,t)=r_{1}(x,t)$, if $(x,t)\in [0,L]\times\mathcal{R}$, and $R_{1}(x,t)=r_{1,l}(x,t)$, if $(x,t)\in [-L,0]\times\mathcal{R}$. It is easy to see, as above, that $R_{1}\in C([-L,L]\times\mathcal{R})$ and $R_{1}|_{(-L,L)\times \mathcal{R}}={\partial H \over\partial x}$. Then, for $t\in\mathcal{R}$, $R_{1,t}|_{(-L,L)}=(H_{t})'$, so that, for $t\in\mathcal{R}$, $R_{1,t}=r_{1,t}$, $(\dag\dag\dag)$, where $r_{1,t}$ is given, as in Definition \ref{wave}, for each $H_{t}$.  Then, the fact that ${\partial H\over \partial x}$ is symmetric about $0$, follows from the pointwise property $(\dag\dag)$, and, $(\dag\dag\dag)$. The second part of the lemma is the similar, following the proof above
\end{proof}

\begin{lemma}
\label{asymmetric2'}
Let $F\in C^{4}([0,L]\times\mathcal{R})$, such that $F(0,t)=F(L,t)=0$, for all $t\in\mathcal{R}$, and let $F_{t,+}^{(2)}(0)=F_{t,+}^{(2)}(L)=0$, $F_{t,+}^{(4)}(0)=F_{t,+}^{(4)}(L)=0$  $(*)$, then there exists $H\in C^{4}([-L,L]\times\mathcal{R})$, (with endfaces identified), such that $H|_{[0,L]\times\mathcal{R}}=F$, $H,{\partial^{2} H\over\partial x^{2}}$ are asymmetric about $0$, and ${\partial H\over\partial x},{\partial^{3} H\over\partial x^{3}}$ are symmetric about $0$. Let $F\in C^{4}([0,L]\times\mathcal{R})$, such that $F(0)=F(L)=0$ and $F_{t,+}^{(1)}(0)=F_{t,-}^{(1)}(L)=0$, $F_{t,+}^{(3)}(0)=F_{t,-}^{(3)}(L)=0$  $(**)$, then there exists $H\in C^{4}([-L,L]\times\mathcal{R})$, (with endfaces identified), such that $H|_{[0,L]\times\mathcal{R}}=F$, $H,{\partial^{2} H\over\partial x^{2}}$ are symmetric about $0$, and ${\partial H\over\partial x},{\partial^{3} H\over\partial x^{3}}$ are asymmetric about $0$.
\end{lemma}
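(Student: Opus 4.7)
The plan is to mirror the strategy of Lemma \ref{asymmetric'}, using Lemma \ref{asymmetric2} twice: once on $F$ itself to produce a $C^{2}$ extension with the correct parity, and a second time on $\partial^{2}F/\partial x^{2}$ to upgrade the regularity from $C^{2}$ to $C^{4}$.

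For the first part, assume $F$ satisfies $(*)$. Apply the first part of Lemma \ref{asymmetric2} directly to $F$ to obtain $H\in C^{2}([-L,L]\times\mathcal{R})$ with $H|_{[0,L]\times\mathcal{R}}=F$, with $H$ asymmetric about $0$, and with $\partial H/\partial x$ symmetric about $0$. By construction of $H$ in that lemma, for $(x,t)\in[-L,0]\times\mathcal{R}$ one has $H(x,t)=-F(-x,t)$, so $\partial^{2}H/\partial x^{2}(x,t)=-\partial^{2}F/\partial x^{2}(-x,t)$ there. Now consider $G:=\partial^{2}F/\partial x^{2}\in C^{2}([0,L]\times\mathcal{R})$. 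The hypothesis $F^{(2)}_{t,+}(0)=F^{(2)}_{t,+}(L)=0$ says $G(0,t)=G(L,t)=0$, and the hypothesis $F^{(4)}_{t,+}(0)=F^{(4)}_{t,+}(L)=0$ says $G^{(2)}_{t,+}(0)=G^{(2)}_{t,+}(L)=0$; hence $G$ itself satisfies the hypothesis $(*)$ of Lemma \ref{asymmetric2}. Applying that lemma to $G$ produces a $C^{2}$ asymmetric extension $\widetilde{G}$ of $G$ to $[-L,L]\times\mathcal{R}$, given by $\widetilde{G}(x,t)=-G(-x,t)$ for $x\in[-L,0]$. But this is exactly the formula for $\partial^{2}H/\partial x^{2}$ computed above, so $\partial^{2}H/\partial x^{2}=\widetilde{G}\in C^{2}([-L,L]\times\mathcal{R})$, which gives $H\in C^{4}([-L,L]\times\mathcal{R})$. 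The symmetry statements for $\partial^{2}H/\partial x^{2}$ and $\partial^{3}H/\partial x^{3}$ are then read off from Lemma \ref{asymmetric2} applied to $G$: $\widetilde{G}$ is asymmetric and $\partial\widetilde{G}/\partial x$ is symmetric.

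For the second part, assume $F$ satisfies $(**)$. Apply the second part of Lemma \ref{asymmetric2} to $F$ to obtain a $C^{2}$ symmetric extension $H$ with $\partial H/\partial x$ asymmetric, given on $[-L,0]\times\mathcal{R}$ by $H(x,t)=F(-x,t)$. Then $\partial^{2}H/\partial x^{2}(x,t)=\partial^{2}F/\partial x^{2}(-x,t)$ on $[-L,0]\times\mathcal{R}$, so the candidate $C^{2}$-extension of $G:=\partial^{2}F/\partial x^{2}$ is its symmetric extension. The hypothesis $F^{(1)}_{t,+}(0)=F^{(1)}_{t,-}(L)=0$ gives $G^{(1)}_{t,+}(0)=G^{(1)}_{t,-}(L)=0$; wait, one checks directly that the relevant conditions on $G$ to apply the second part of Lemma \ref{asymmetric2} are $G(0,t)$, $G(L,t)$ and $G^{(1)}_{t,\pm}$ at the endpoints, and here one uses $F^{(3)}_{t,+}(0)=F^{(3)}_{t,-}(L)=0$, i.e.\ $G^{(1)}_{t,+}(0)=G^{(1)}_{t,-}(L)=0$, together with the parity conventions established in Lemma \ref{symasym2} for the first part to verify the vanishing of $G$ itself at the endpoints (from symmetry of $\partial^{2}F/\partial x^{2}$ extended to $[-L,L]$, the odd-order derivatives vanish at $0,\pm L$). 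One then concludes that $\partial^{2}H/\partial x^{2}\in C^{2}([-L,L]\times\mathcal{R})$, so $H\in C^{4}$, with $\partial^{2}H/\partial x^{2}$ symmetric and $\partial^{3}H/\partial x^{3}$ asymmetric.

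The main obstacle is purely bookkeeping: verifying that in each case the relevant boundary-value hypotheses on $F$ translate exactly into the hypotheses needed to reapply Lemma \ref{asymmetric2} to $\partial^{2}F/\partial x^{2}$, and that the candidate extension of $\partial^{2}F/\partial x^{2}$ produced by that second application coincides with $\partial^{2}H/\partial x^{2}$ on each half-interval. Once this identification of extensions is made, the $C^{4}$ regularity of $H$ is immediate, and the parity claims for $H$, $\partial^{2}H/\partial x^{2}$, $\partial H/\partial x$, $\partial^{3}H/\partial x^{3}$ follow from Lemma \ref{asymmetric2} applied to $F$ and to $\partial^{2}F/\partial x^{2}$ respectively.
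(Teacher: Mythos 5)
Your proposal is correct and matches the paper's own proof essentially verbatim: apply Lemma \ref{asymmetric2} to $F$ to get the $C^{2}$ extension $H$, check that $\partial^{2}F/\partial x^{2}$ satisfies the hypotheses of Lemma \ref{asymmetric2}, and identify the resulting extension with $\partial^{2}H/\partial x^{2}$ to conclude $H\in C^{4}$ and read off the parities. The only quibble is in your second part, where you try to verify that $\partial^{2}F/\partial x^{2}$ vanishes at the endpoints under $(**)$ --- this is false in general (take $F=1-\cos(2\pi x/L)$, independent of $t$) --- but it is harmless, since the symmetric-extension half of Lemma \ref{asymmetric2} (via Lemma \ref{asymmetric}) never actually uses that vanishing hypothesis, only the first-derivative conditions, which here come from $F^{(3)}_{t,+}(0)=F^{(3)}_{t,-}(L)=0$.
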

\begin{proof}
For the first part, by Lemma \ref{asymmetric2}, we can find $H\in C^{2}([-L,L]\times\mathcal{R})$, with $H|_{[0,L]\times\mathcal{R}}=F$, $H$ asymmetric about $0$, and ${\partial H\over \partial x}$ symmetric about $0$. We have that ${\partial^{2}F\over\partial x^{2}}$ satisfies the conditions of Lemma \ref{asymmetric2}, as, by the assumptions,  ${\partial^{2}F\over\partial x^{2}}\in C^{2}([0,L]\times\mathcal{R})$, ${\partial^{2}F\over\partial x^{2}}(0,t)={\partial^{2}F\over\partial x^{2}}(L,t)=0$ and $({\partial^{2}F\over\partial x^{2}})_{t,+}^{(2)}(0)=F_{t,+}^{(4)}(0)=F_{t,-}^{(4)}(L)=({\partial^{2}F\over\partial x^{2}})_{t,-}^{(2)}(L)=0$, for all $t\in\mathcal{R}$, (\footnote{Here, we use the fact that, for $t\in\mathcal{R}$, $(({\partial^{2}F\over\partial x^{2}})_{t})|_{(0,L)}=(F_{t})^{(2)}|_{(0,L)}$, so $(({\partial^{2}F\over\partial x^{2}})_{t})^{(2)}|_{(0,L)}=(F_{t})^{(4)}|_{(0,L)}$, $(*)$, and, using Definition \ref{waves}, the limits $({\partial^{2}F\over\partial x^{2}})_{t,+}^{(2)}(0)=F_{t,+}^{(4)}(0)$ are recovered uniquely from the relation $(*)$      }). Moreover, by definition of $H$, we have that ${\partial^{2}H\over\partial x^{2}}(x',t)={\partial^{2}F\over\partial x^{2}}(x',t)$, for $(x',t)\in ([0,L]\times\mathcal{R})$, and  ${\partial^{2}H\over\partial x^{2}}(x',t)=-{\partial^{2}F\over\partial x^{2}}(-x',t)$, for $(x',t)\in ((-L,0)\times\mathcal{R})$. Hence, by the conclusion of Lemma \ref{asymmetric2}, we have that ${\partial^{2}H\over\partial x^{2}}\in C^{2}([-L,L])$, (with endfaces identified) ${\partial^{2}H\over\partial x^{2}}$ is symmetric about $0$, and ${\partial^{3}H\over\partial x^{3}}$ is asymmetric about $0$, as required.

\end{proof}

\begin{lemma}
\label{decomp2}
Let $F\in C^{\infty}_{0}([0,L]\times\mathcal{R})$,  then there exists $\{F_{1},F_{2}\}\subset C^{\infty}_{0}([0,L]\times\mathcal{R})$, with $F'_{1,t,+}(0)=F'_{1,t,-}(L)=0$, $F''_{2,t,+}(0)=F''_{2,t,-}(L)=0$, such that $F=F_{1}+F_{2}$.

\end{lemma}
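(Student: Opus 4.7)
The plan is to mimic the construction in Lemma \ref{decomp} fiberwise in $t$, producing $F_1$ as a polynomial in $x$ whose coefficients depend on $t$ only through the boundary traces $F''_{t,+}(0)$ and $F''_{t,-}(L)$. Let $r_2\in C([0,L]\times\mathcal{R})$ denote the continuous extension of $\partial^2F/\partial x^2$ to $[0,L]\times\mathcal{R}$ supplied by the definition of $C^\infty_0([0,L]\times\mathcal{R})$, and set $a(t):=r_2(0,t)=F''_{t,+}(0)$ and $b(t):=r_2(L,t)=F''_{t,-}(L)$. From the proof of Lemma \ref{decomp}, the linear map $T:V_6\to\mathcal{R}^6$ is an isomorphism; I fix once and for all the unique $\alpha,\beta\in V_6$ with $T(\alpha)=(0,0,0,0,1,0)$ and $T(\beta)=(0,0,0,0,0,1)$, and define
\[
F_1(x,t)=\alpha(x)\,a(t)+\beta(x)\,b(t),\qquad F_2(x,t)=F(x,t)-F_1(x,t).
\]

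For each fixed $t\in\mathcal{R}$, $F_{1,t}\in V_6$ satisfies $T(F_{1,t})=(0,0,0,0,a(t),b(t))$, so $F_{1,t}$ is exactly the polynomial produced by applying the proof of Lemma \ref{decomp} to $F_t\in C^\infty_0([0,L])$. This instantly yields $F_1(0,t)=F_1(L,t)=0$, $F'_{1,t,+}(0)=F'_{1,t,-}(L)=0$, $F''_{1,t,+}(0)=a(t)$ and $F''_{1,t,-}(L)=b(t)$, whence also $F_2(0,t)=F_2(L,t)=0$, $F''_{2,t,+}(0)=a(t)-a(t)=0$ and $F''_{2,t,-}(L)=b(t)-b(t)=0$. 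So all of the fiberwise boundary identities demanded by the statement are immediate from Lemma \ref{decomp} once the decomposition is set up as above.

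What remains is to check that $F_1,F_2\in C^\infty_0([0,L]\times\mathcal{R})$. Continuity of $F_1$ on $[0,L]\times\mathcal{R}$ and the existence of continuous extensions of the pure $x$-derivatives $\partial^iF_1/\partial x^i(x,t)=\alpha^{(i)}(x)a(t)+\beta^{(i)}(x)b(t)$ to $[0,L]\times\mathcal{R}$ follow immediately from the polynomial nature of $\alpha,\beta$ and the continuity of $a,b$. The one delicate point, which I expect to be the main obstacle, is smoothness of $F_1$ on the open rectangle $(0,L)\times\mathcal{R}$; since $F_1(x,t)=\alpha(x)a(t)+\beta(x)b(t)$ is obviously smooth in $x$, this reduces to showing $a,b\in C^\infty(\mathcal{R})$. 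I plan to establish this from the assumption $F\in C^\infty_0([0,L]\times\mathcal{R})$ by differentiating the interior identity $\partial^2F/\partial x^2=r_2$ in $t$ iteratively, invoking the continuous extensibility of the mixed partials $\partial^{k+2}F/(\partial x^2\partial t^k)$ (which, together with $F(0,t)=F(L,t)\equiv 0$, is forced by iterating the $C^\infty_0$ assumption), and identifying $a^{(k)}(t)$ and $b^{(k)}(t)$ with the traces of those extended mixed partials at $x=0$ and $x=L$ respectively. Once $a,b\in C^\infty(\mathcal{R})$ is in hand, $F_1\in C^\infty_0([0,L]\times\mathcal{R})$ follows at once, and $F_2=F-F_1$ then inherits $C^\infty_0$ regularity from $F$ and $F_1$, completing the proof.
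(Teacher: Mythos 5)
Your proposal is correct and takes essentially the same route as the paper: the paper likewise sets $v_{1,t}=(0,0,0,0,F''_{t,+}(0),F''_{t,-}(L))$ and defines $F_{1}=T^{-1}(v_{1,t})=\sum_{i=0}^{5}(\lambda_{i}\phi_{0}(t)+\mu_{i}\phi_{L}(t))x^{i}$, which is exactly your $\alpha(x)a(t)+\beta(x)b(t)$ with $\phi_{0},\phi_{L}$ your trace functions $a,b$. The only difference is one of emphasis: the paper declares the $t$-smoothness of $\phi_{0},\phi_{L}$ to be ``clear'' (supplying an interchange-of-limits justification only in the footnote to the analogous Lemma \ref{decomp2'}), whereas you rightly isolate this as the one delicate step and sketch essentially that same interchange-of-limits argument via the continuously extended mixed partials.
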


\begin{proof}
This is just a uniform version of Lemma \ref{decomp}. Let;\\

$v_{1,t}=(0,0,0,0,F''_{t,+}(0),F''_{t,+}(L))$, $p_{1,t}=T^{-1}(v_{1,t})$\\

 Then;\\

$p_{1,t}=\sum_{i=0}^{5}d_{i}(t)x^{i}$\\

where the coefficients $d_{i}(t)=\lambda_{i}F''_{t,+}(0)+\mu_{i}F''_{t,+}(L)$\\

for fixed constants $\{\lambda_{i},\mu_{i}\}\subset\mathcal{R}$, $0\leq i\leq 5$. Let $r_{2}\in C^{\infty}_{0}([0,L]\times\mathcal{R})$ be given, as in Definition \ref{wave}, and $\phi_{0}(t)=r_{2}(t,0),\phi_{L}(t)=r_{2}(t,L)$, then, clearly, $\{\phi_{0},\phi_{L}\}\subset C^{\infty}(\mathcal{R})$, so clearly, we have that;\\

$p_{1,t}=\sum_{i=0}^{5}(\lambda_{i}\phi_{0}(t)+\mu_{i}\phi_{L}(t))x^{i}$\\

and $p_{1,t}\in C^{\infty}_{0}([0,L]\times\mathcal{R})$. Letting $F_{1}=p_{1,t}$, and $F_{2}=F-F_{1}$, we obtain the result.

\end{proof}

\begin{lemma}
\label{decomp2'}
Let $F\in C^{\infty}_{0}([0,L]\times\mathcal{R})$,  then there exists $\{F_{1},F_{2}\}\subset C^{\infty}_{0}([0,L]\times\mathcal{R})$, with $F^{(2j-1)}_{1,t,+}(0)=F^{(2j-1)}_{1,t,-}(L)=0$, $F^{(2j)}_{2,t,+}(0)=F^{(2j)}_{2,t,-}(L)=0$, for $1\leq j\leq n$, such that $F=F_{1}+F_{2}$.
\end{lemma}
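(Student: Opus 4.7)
The plan is to lift Lemma \ref{decomp'} to the parametric setting, exactly as Lemma \ref{decomp2} lifts Lemma \ref{decomp}. For each fixed $t\in\mathcal{R}$, the slice $F_{t}(x)=F(x,t)$ lies in $C^{\infty}_{0}([0,L])$, so the construction of Lemma \ref{decomp'} produces a polynomial $p_{1,t}\in V_{2(2n+1)}$, of degree at most $4n+1$, determined by $T(p_{1,t})=v_{1,t}$, where $v_{1,t}$ is the data vector from the proof of \ref{decomp'}: its first two entries are $0$, its entries in positions $4k-1$ and $4k$ are $0$ for $1\le k\le n$, and its entries in positions $4k+1$ and $4k+2$ are $F^{(2k)}_{t,+}(0)$ and $F^{(2k)}_{t,-}(L)$ respectively, for $1\le k\le n$.

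Since $T:V_{2(2n+1)}\to\mathcal{R}^{2(2n+1)}$ was shown to be a linear isomorphism (independent of $t$), the coefficients of $p_{1,t}$ depend linearly on the entries of $v_{1,t}$, with fixed constants not involving $t$. Writing $p_{1,t}(x)=\sum_{i=0}^{4n+1}d_{i}(t)x^{i}$, there exist real constants $\{\lambda_{i,k},\mu_{i,k}:0\le i\le 4n+1,\ 1\le k\le n\}$, independent of $t$, such that
$$d_{i}(t)=\sum_{k=1}^{n}\bigl(\lambda_{i,k}F^{(2k)}_{t,+}(0)+\mu_{i,k}F^{(2k)}_{t,-}(L)\bigr).$$
Using the continuous extensions $r_{2k}\in C([0,L]\times\mathcal{R})$ of $\partial^{2k}F/\partial x^{2k}$ supplied by Definition \ref{wave}, the boundary traces $t\mapsto F^{(2k)}_{t,+}(0)=r_{2k}(0,t)$ and $t\mapsto F^{(2k)}_{t,-}(L)=r_{2k}(L,t)$ are smooth in $t$, by exactly the reasoning used implicitly in the proof of Lemma \ref{decomp2}. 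Hence each $d_{i}\in C^{\infty}(\mathcal{R})$, and $F_{1}(x,t):=p_{1,t}(x)$ defines an element of $C^{\infty}([0,L]\times\mathcal{R})$. Since the first two entries of $v_{1,t}$ are $0$, we have $F_{1}(0,t)=F_{1}(L,t)=0$, so $F_{1}\in C^{\infty}_{0}([0,L]\times\mathcal{R})$.

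Finally, set $F_{2}:=F-F_{1}\in C^{\infty}_{0}([0,L]\times\mathcal{R})$. The vanishing of the entries of $v_{1,t}$ in positions $4k-1$ and $4k$ for $1\le k\le n$ gives $F^{(2j-1)}_{1,t,+}(0)=F^{(2j-1)}_{1,t,-}(L)=0$ for $1\le j\le n$, and the matching of the remaining entries of $v_{1,t}$ to the even boundary values of $F_{t}$ forces $F^{(2j)}_{1,t,+}(0)=F^{(2j)}_{t,+}(0)$ and $F^{(2j)}_{1,t,-}(L)=F^{(2j)}_{t,-}(L)$ for $1\le j\le n$, so that subtraction yields $F^{(2j)}_{2,t,+}(0)=F^{(2j)}_{2,t,-}(L)=0$, as required. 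The only substantive point beyond the linear-algebraic bookkeeping already carried out in Lemma \ref{decomp'} is the smooth $t$-dependence of the boundary derivatives $t\mapsto F^{(2k)}_{t,\pm}$; this is the same issue settled in the proof of Lemma \ref{decomp2}, and I would handle it by the same argument there.
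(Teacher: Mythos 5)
Your proposal is correct and follows essentially the same route as the paper: define $v_{1,t}$ from the even boundary derivatives of $F_{t}$, apply the fixed isomorphism $T^{-1}$ to get $p_{1,t}$ with coefficients depending linearly on those boundary traces, verify the traces are smooth in $t$ via the continuous extensions $r_{2k}$, and set $F_{1}=p_{1,t}$, $F_{2}=F-F_{1}$. The one point you defer --- smoothness in $t$ of $r_{2k}(0,t)$ and $r_{2k}(L,t)$ --- is exactly the point the paper settles in a footnote by interchanging the $h\to 0$ and $x\to L$ limits using uniform convergence of the difference quotients, so your deferral matches the paper's treatment.
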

\begin{proof}
This is just a uniform version of Lemma \ref{decomp'}. Let $v_{1,t}$ be defined as in Lemma \ref{decomp'}, replacing $\{f_{+}^{(2k)}(0),f_{-}^{(2k)}(L):1\leq k\leq n\}$ by $\{F_{t,+}^{(2k)}(0),F_{t,-}^{(2k)}(L):1\leq k\leq n\}$, and, let $p_{1,t}=T^{-1}(v_{1,t})$.\\

Then;\\

$p_{1,t}=\sum_{i=0}^{4n+1}d_{i}(t)x^{i}$\\

where the coefficients $d_{i}(t)=\sum_{k=1}^{n}(\lambda_{ik}F^{(2k)}_{t,+}(0)+\mu_{ik}F^{(2k)}_{t,-}(L)$\\

for fixed constants $\{\lambda_{ik},\mu_{ik}:0\leq i\leq 4n+1,1\leq k\leq n\}\subset\mathcal{R}$. Let $\{r_{2k}:1\leq k\leq n\}\subset C^{\infty}_{0}([0,L]\times\mathcal{R})$ be given, as in Definition \ref{wave}, and $\phi_{0,k}(t)=r_{2k}(t,0),\phi_{L,k}(t)=r_{2k}(t,L)$, then, $\{\phi_{0,k},\phi_{L,k}:1\leq k\leq n\}\subset C^{\infty}(\mathcal{R})$, (\footnote{We have that;\\

$lim_{h\rightarrow 0}({r_{2k}(L,t+h)-r_{2k}(L,t)\over h})=lim_{h\rightarrow 0}(lim_{x\rightarrow L}({r_{2k}(x,t+h)-r_{2k}(x,t)\over h}))$, $(*)$\\

As $r_{2k}\in C([-L,L]\times\mathcal{R})$, for fixed $h\neq 0$;\\

 $lim_{x\rightarrow L}{r_{2k}(x,t+h)-r_{2k}(x,t)\over h}={r_{2k}(L,t+h)-r_{2k}(L,t)\over h}$\\

For fixed $x'\neq L$;\\

$lim_{h\rightarrow 0}{r_{2k}(x',t+h)-r_{2k}(x',t)\over h}={\partial^{2k+1}F\over \partial x^{2k+1}}(x',t)$\\

and, moreover, the convergence is uniform for $x'\in (0,L)$, as ${\partial^{2k+1}F\over \partial x^{2k+1}}$ is bounded on $(0,L)\times (t-\epsilon,t+\epsilon)$, for any $\epsilon>0$. It follows that we can interchange the limits in $(*)$, to obtain that;\\

$lim_{h\rightarrow 0}({r_{2k}(L,t+h)-r_{2k}(L,t)\over h})$\\

$=lim_{x\rightarrow L}(lim_{h\rightarrow 0}({r_{2k}(x,t+h)-r_{2k}(x,t)\over h}))$\\

$=lim_{x'\rightarrow L}{\partial^{2k+1}F\over \partial x^{2k+1}}(x',t)=r_{2k+1}(L,t)$\\}). We have that;\\

$p_{1,t}=\sum_{i=0}^{4n+1}(\sum_{k=1}^{n}(\lambda_{ik}\phi_{0,k}(t)+\mu_{ik}\phi_{L,k})x^{i}$\\

and $p_{1,t}\in C^{\infty}_{0}([0,L]\times\mathcal{R})$. Letting $F_{1}=p_{1,t}$, and $F_{2}=F-F_{1}$, we obtain the result.

\end{proof}

\begin{lemma}
\label{extension2}
Let $F\in C^{\infty}_{0}([0,L]\times\mathcal{R})$, then, there exist $\{G_{1},G_{2},G\}\subset C^{2}([-L,L]\times\mathcal{R})$, such that, for all $\epsilon>0$ ;\\

$(i)$.$G|_{[\epsilon,L-\epsilon)\times\mathcal{R}}=F|_{[\epsilon,L-\epsilon)\times\mathcal{R}}$.\\

$(ii)$. $G_{1}$ is asymmetric and $\partial G_{1}\over\partial x$ is symmetric about $0$.\\

$(iii)$. $G_{2}$ is symmetric and $\partial G_{2}\over\partial x$ is asymmetric about $0$.\\

\end{lemma}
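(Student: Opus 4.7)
The plan is to imitate the proof of Lemma \ref{extension}, but in the time-dependent setting, by combining the parametric decomposition of Lemma \ref{decomp2} with the parametric extension of Lemma \ref{asymmetric2}. First, I would invoke Lemma \ref{decomp2} to write $F = F_{1} + F_{2}$ with $\{F_{1},F_{2}\} \subset C^{\infty}_{0}([0,L]\times\mathcal{R})$, where, for every $t\in\mathcal{R}$, $F'_{1,t,+}(0)=F'_{1,t,-}(L)=0$ and $F''_{2,t,+}(0)=F''_{2,t,-}(L)=0$. These are exactly the vanishing boundary conditions required to feed each component into a different case of Lemma \ref{asymmetric2}.

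Next, I would apply the first part of Lemma \ref{asymmetric2} to $F_{2}$, whose second $x$-derivatives vanish at the endpoints, to obtain $G_{1}\in C^{2}([-L,L]\times\mathcal{R})$ with $G_{1}|_{[0,L]\times\mathcal{R}}=F_{2}$, $G_{1}$ asymmetric about $0$ and $\partial G_{1}/\partial x$ symmetric about $0$; this gives (ii). Then I would apply the second part of Lemma \ref{asymmetric2} to $F_{1}$, whose first $x$-derivatives vanish at the endpoints, to obtain $G_{2}\in C^{2}([-L,L]\times\mathcal{R})$ with $G_{2}|_{[0,L]\times\mathcal{R}}=F_{1}$, $G_{2}$ symmetric about $0$ and $\partial G_{2}/\partial x$ asymmetric about $0$; this gives (iii).

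Finally, I would set $G = G_{1}+G_{2}$. As a sum of two elements of $C^{2}([-L,L]\times\mathcal{R})$, $G$ again lies in $C^{2}([-L,L]\times\mathcal{R})$. On $[0,L]\times\mathcal{R}$ we have $G = G_{1}+G_{2} = F_{2}+F_{1} = F$, which yields (i) for every $\epsilon>0$ (indeed on all of $[0,L]\times\mathcal{R}$, not just $[\epsilon,L-\epsilon)\times\mathcal{R}$).

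There is no genuine obstacle here: the proof is essentially bookkeeping, matching the boundary conditions produced by Lemma \ref{decomp2} to the hypotheses of the two cases of Lemma \ref{asymmetric2}. The only point to verify carefully, and one that is already handled by Lemma \ref{decomp2}, is that the polynomial-in-$x$ component $F_{1}$ and the remainder $F_{2}$ each retain $C^{\infty}$ smoothness jointly in $(x,t)$, so that the hypothesis $F\in C^{2}([0,L]\times\mathcal{R})$ needed in Lemma \ref{asymmetric2} is satisfied by both pieces.
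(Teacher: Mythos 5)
Your proposal is correct and follows essentially the same route as the paper: decompose $F=F_{1}+F_{2}$ via Lemma \ref{decomp2}, feed each piece into the appropriate case of Lemma \ref{asymmetric2}, and sum the resulting extensions. You even make the matching of boundary conditions to cases (the piece with vanishing second $x$-derivatives goes to the asymmetric extension, the piece with vanishing first $x$-derivatives to the symmetric one) more explicit than the paper's own statement, which garbles the restriction identities $G_{i}|_{[0,L]\times\mathcal{R}}$.
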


\begin{proof}
By Lemma \ref{decomp2}, we can find $\{F_{1},F_{2}\}\subset C^{\infty}_{0}([0,L]\times\mathcal{R})$, with $F'_{1,+}(0)=F'_{1,+}(L)=0$, $F''_{2,+}(0)=F''_{2,+}(L)=0$, such that $F=F_{1}+F_{2}$. By Lemma \ref{asymmetric2}, we can find $\{G_{1},G_{2}\}\subset C^{2}_{0}([-L,L])$, with $G_{1}|_{[0,L]}=G_{1}$, $G_{1}$ asymmetric and $\partial G_{1}\over\partial x$ symmetric about $0$, and with $G_{2}|_{[0,L]}=G_{2}$, $G_{2}$ symmetric and $\partial G_{2}\over\partial x$ asymmetric about $0$. Let $G=G_{1}+G_{2}$, then $G\in C^{2}_{0}([-L,L]\times\mathcal{R})$, and $G|_{[\epsilon,L-\epsilon)\times\mathcal{R}}=F|_{[\epsilon,L-\epsilon)\times\mathcal{R}}$, as required.
\end{proof}

\begin{lemma}
\label{extension2'}
Let $F\in C^{\infty}_{0}([0,L]\times\mathcal{R})$, then, there exist $\{G_{1},G_{2},G\}\subset C^{4}([-L,L]\times\mathcal{R})$, such that, for all $0\leq\epsilon<{L\over 2}$;\\

$(i)$.$G|_{[\epsilon,L-\epsilon)\times\mathcal{R}}=F|_{[\epsilon,L-\epsilon)\times\mathcal{R}}$.\\

$(ii)$. $G_{1},{\partial^{2}G_{1}\over\partial x^{2}}$ are asymmetric and ${\partial G_{1}\over\partial x},{\partial^{3} G_{1}\over\partial x^{3}}$ are symmetric about $0$.\\

$(iii)$. $G_{2},{\partial^{2} G_{2}\over\partial x^{2}}$ are symmetric and ${\partial G_{2}\over\partial x},{\partial^{3} G_{2}\over\partial x^{3}}$ are asymmetric about $0$.\\

\end{lemma}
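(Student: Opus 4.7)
The plan is to imitate the proof of Lemma \ref{extension2}, substituting the order-$4$ versions of the auxiliary results at each step. So I first apply Lemma \ref{decomp2'} with $n=2$ to write $F=F_{1}+F_{2}$, where $\{F_{1},F_{2}\}\subset C^{\infty}_{0}([0,L]\times\mathcal{R})$, $F^{(2j-1)}_{1,t,+}(0)=F^{(2j-1)}_{1,t,-}(L)=0$ for $j=1,2$, and $F^{(2j)}_{2,t,+}(0)=F^{(2j)}_{2,t,-}(L)=0$ for $j=1,2$. This is exactly the pair of boundary-vanishing conditions on odd derivatives for $F_{1}$, and on even derivatives for $F_{2}$, needed to feed into Lemma \ref{asymmetric2'}.

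Next, I apply the second half of Lemma \ref{asymmetric2'} to $F_{1}$: since $F_{1}\in C^{\infty}_{0}([0,L]\times\mathcal{R})\subset C^{4}([0,L]\times\mathcal{R})$, and $F_{1}$ satisfies hypothesis $(**)$ of that lemma, I obtain $G_{2}\in C^{4}([-L,L]\times\mathcal{R})$ extending $F_{1}$, with $G_{2}$ and ${\partial^{2}G_{2}\over\partial x^{2}}$ symmetric about $0$ and ${\partial G_{2}\over\partial x}$, ${\partial^{3}G_{2}\over\partial x^{3}}$ asymmetric about $0$. (The naming is flipped relative to the decomposition, because the symmetric extension is the one whose odd derivatives vanish at the boundary.) Similarly I apply the first half of Lemma \ref{asymmetric2'} to $F_{2}$, which satisfies hypothesis $(*)$, and obtain $G_{1}\in C^{4}([-L,L]\times\mathcal{R})$ extending $F_{2}$, with $G_{1}$ and ${\partial^{2}G_{1}\over\partial x^{2}}$ asymmetric about $0$ and ${\partial G_{1}\over\partial x}$, ${\partial^{3}G_{1}\over\partial x^{3}}$ symmetric about $0$. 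This gives $(ii)$ and $(iii)$.

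Finally, I set $G=G_{1}+G_{2}$. Since $C^{4}([-L,L]\times\mathcal{R})$ is a vector space, $G\in C^{4}([-L,L]\times\mathcal{R})$. For $(x,t)\in [0,L]\times\mathcal{R}$ we have $G(x,t)=G_{1}(x,t)+G_{2}(x,t)=F_{2}(x,t)+F_{1}(x,t)=F(x,t)$, and in particular $(i)$ holds for every $0\leq\epsilon<L/2$ (indeed it holds on the whole of $[0,L]\times\mathcal{R}$, which contains $[\epsilon,L-\epsilon)\times\mathcal{R}$).

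I do not expect any serious obstacle; the only care needed is bookkeeping for which of $G_{1},G_{2}$ corresponds to which part of the decomposition, and checking that the hypotheses $(*)$ and $(**)$ of Lemma \ref{asymmetric2'} are matched correctly against the output of Lemma \ref{decomp2'}. Both verifications are immediate from the definitions.
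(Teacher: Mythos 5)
Your proposal is correct and follows essentially the same route as the paper: decompose $F=F_{1}+F_{2}$ via Lemma \ref{decomp2'} (with the odd derivatives of one piece and the even derivatives of the other vanishing at the boundary), extend each piece by the appropriate half of Lemma \ref{asymmetric2'}, and set $G=G_{1}+G_{2}$. Your explicit remark that the labels must be flipped --- the piece with vanishing odd derivatives satisfies hypothesis $(**)$ and so receives the \emph{symmetric} extension --- is in fact more careful than the paper's own write-up, which attaches the asymmetric extension to $F_{1}$ despite $F_{1}$ satisfying $(**)$.
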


\begin{proof}
By Lemma \ref{decomp2'}, we can find $\{F_{1},F_{2}\}\subset C^{\infty}_{0}([0,L]\times\mathcal{R})$, with $F^{(1)}_{1,+}(0)=F^{(1)}_{1,-}(L)=0$, $F^{(3)}_{1,+}(0)=F^{(3)}_{1,-}(L)=0$, $F^{(2)}_{2,+}(0)=F^{(2)}_{2,-}(L)=0$, $F^{(4)}_{2,+}(0)=F^{(4)}_{2,-}(L)=0$, such that $F=F_{1}+F_{2}$. By Lemma \ref{asymmetric2'}, we can find $\{G_{1},G_{2}\}\subset C^{4}_{0}([-L,L])$, with $G_{1}|_{[0,L]}=F_{1}$, $G_{1},{\partial^{2}G_{1}\over \partial x^{2}}$ asymmetric and ${\partial G_{1}\over\partial x}, {\partial^{3}G_{1}\over\partial x^{3}}$ symmetric about $0$, $G_{2}|_{[0,L]}=F_{2}$, $G_{2},{\partial^{2}G_{2}\over \partial x^{2}}$ symmetric and ${\partial G_{2}\over\partial x}, {\partial^{3}G_{2}\over\partial x^{3}}$ asymmetric about $0$  Let $G=G_{1}+G_{2}$, then $G\in C^{4}_{0}([-L,L]\times\mathcal{R})$, and $G|_{[\epsilon,L-\epsilon)\times\mathcal{R}}=F|_{[\epsilon,L-\epsilon)\times\mathcal{R}}$, as required.
\end{proof}

\begin{lemma}
\label{limit}
Let $F\in C^{\infty}_{0}([0,L]\times\mathcal{R})$ be a solution to the wave equation, then, for all $t\in\mathcal{R}$\\

$lim_{\epsilon\rightarrow 0}{\partial^{2}F\over \partial x^{2}}|_{(\epsilon,t)}=0$\\

$lim_{\epsilon\rightarrow 0}{\partial^{2}F\over \partial x^{2}}(L-\epsilon,t)=0$\\

\end{lemma}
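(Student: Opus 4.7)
The plan is to convert the statement about $\partial^2 F/\partial x^2$ at the endpoints into one about $\partial^2 F/\partial t^2$ via the wave equation, and then use the boundary condition $F(0,t)=F(L,t)=0$, which makes the $t$-profile identically zero at $x=0$ and $x=L$, to force the limit.

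First, since $F \in C^\infty_0([0,L]\times\mathcal{R})$, the function $r_2:=\partial^2 F/\partial x^2$ has, by definition, a continuous extension to $[0,L]\times\mathcal{R}$ (still denoted $r_2$). The wave equation gives $\partial^2 F/\partial t^2 = (T/\mu)\,r_2$ on $(0,L)\times\mathcal{R}$, so $\partial^2 F/\partial t^2$ also extends to a continuous function $G:=(T/\mu)\,r_2 \in C([0,L]\times\mathcal{R})$. The lemma reduces to showing $G(0,t)=0=G(L,t)$ for every $t$.

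Fix $t\in\mathcal{R}$ and $h>0$. For each $x\in(0,L)$ the slice $s\mapsto F(x,s)$ is $C^\infty$, so Taylor's theorem with integral remainder at $s=t$ gives
$$F(x,t+h)+F(x,t-h)-2F(x,t)=\int_0^h (h-u)\bigl[\partial_t^2 F(x,t+u)+\partial_t^2 F(x,t-u)\bigr]\,du.$$
Now let $x\to 0^+$. The left-hand side tends to $F(0,t+h)+F(0,t-h)-2F(0,t)=0$ by continuity of $F$ and the Dirichlet condition. On the right, the integrand equals $(h-u)[G(x,t+u)+G(x,t-u)]$, converges pointwise to $(h-u)[G(0,t+u)+G(0,t-u)]$ by continuity of $G$, and is bounded by $2(h-u)\sup_{[0,L]\times[t-h,t+h]}|G|$, an integrable function on $[0,h]$ since $G$ is continuous on the compact set $[0,L]\times[t-h,t+h]$. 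Dominated convergence therefore gives
$$\int_0^h (h-u)\bigl[G(0,t+u)+G(0,t-u)\bigr]\,du = 0\quad\text{for every }h>0.$$

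Finally, I divide by $h^2$ and let $h\to 0^+$. Continuity of $G$ at $(0,t)$ implies that $G(0,t\pm u)=G(0,t)+o(1)$ uniformly in $u\in[0,h]$, so the expression equals $\bigl(1/h^2\bigr)\cdot(2G(0,t)+o(1))\cdot(h^2/2)=G(0,t)+o(1)$, forcing $G(0,t)=0$. The endpoint $x=L$ is handled identically after replacing the Taylor expansion around $0$ by one around $L$ and taking $x\to L^-$. The only delicate point is the exchange of $\lim_{x\to 0^+}$ with the integral, and that is immediate from the local boundedness of $G$ built into the continuous extension provided by the definition of $C^\infty_0([0,L]\times\mathcal{R})$.
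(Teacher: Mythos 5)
Your proof is correct, and it takes a genuinely different and considerably more elementary route than the paper's. The paper proves this lemma by first extending $F$ to $[-L,L]\times\mathcal{R}$ using the decomposition and reflection machinery of Lemmas \ref{decomp2'} and \ref{extension2'}, expanding the extension in a Fourier series in $x$, deriving the ODE $c_m''+(\pi^{2}m^{2}T/\mu L^{2})c_m=0$ for the coefficients, and then estimating the tail of $\sum|a_m+b_m|m^{2}$ near the endpoint using the decay $|a_m+b_m|\leq C/m^{4}$ that comes from the $C^{4}$ regularity of the extension. You instead observe that the claim is equivalent, via the wave equation, to showing that the continuous extension $G=(T/\mu)r_{2}$ of $\partial^{2}F/\partial t^{2}$ vanishes on the boundary faces, and you obtain that directly from the Dirichlet condition by a second-order Taylor expansion in $t$ with integral remainder plus dominated convergence in $x$; no reflection, no Fourier series, and no appeal to the auxiliary lemmas is needed. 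The one point that genuinely requires care --- that the boundary value of the continuous extension of $\partial_{t}^{2}F$ agrees with $\partial_{t}^{2}$ of the identically zero boundary trace --- is exactly what your integral-remainder argument supplies, and the exchange of $\lim_{x\to 0^{+}}$ with the integral is justified, as you say, by boundedness of $G$ on the compact set $[0,L]\times[t-h,t+h]$. Your argument also makes transparent that this is the standard first compatibility condition for the Dirichlet problem for the wave equation; the only thing the paper's longer computation buys is that it sets up the Fourier apparatus that is reused in the final theorem.
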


\begin{proof}
Let $\{G_{1},G_{2},G\}$ be given as in Lemma \ref{extension2'}. Then, for all $t\in\mathcal{R}$, $G_{t}\in C^{4}([-L,L])$, and, using \cite{dep1}, the Fourier series expansion $\sum_{m\in\mathcal{Z}}c_{m}(t)e^{{\pi ixm\over L}}$ of $G_{t}$ converges uniformly to $G_{t}$ on $[-L,L]$, (\footnote{In fact, we only require that $G_{t}\in C^{2}([-L,L])$, see also \cite{SS}}). Similarly, as $G^{(n)}_{t}\in C^{2}([-L,L])$, for $0\leq n\leq 2$, the Fourier series expansion   $\sum_{m\in\mathcal{Z}}c_{m}(t)({\pi i m\over L})^{n}e^{{\pi ixm\over L}}$ of $G^{(n)}_{t}$, converges uniformly to $G^{(n)}_{t}$ on $[-L,L]$, for $0\leq n\leq 2$, $(*)$. We have that;\\

$c_{m}(t)={1\over 2L}\int_{-L}^{L}G(x,t)e^{-{\pi ixm\over L}}dx$\\

Hence, as, for $0\leq n\leq 4$, $t_{0}\in\mathcal{R}$, ${\partial^{n}G\over \partial x^{n}}$ is bounded on $[-L,L]\times (t_{0}-\delta,t_{0}+\delta)$, by the DCT, we have that $c_{\epsilon,m}\in C^{4}(\mathcal{R})$. Moreover, we have, for $0\leq n\leq 4$;\\

$c_{m}^{(n)}(t)={1\over 2L}\int_{-L}^{L}{\partial^{n}G_{\epsilon}\over \partial t^{n}}(x,t)e^{-{\pi ixm\over L}}dx$\\

Hence, again, as ${\partial^{n}G_{\epsilon,t}\over \partial t^{n}}\in C^{2}([-L,L])$, for $0\leq n\leq 2$, the Fourier series expansion $\sum_{m\in\mathcal{Z}}c_{m}^{(n)}(t)e^{{\pi ixm\over L}}$ of ${\partial^{n}G_{t}\over \partial t^{n}}$ converges uniformly to ${\partial^{n}G_{t}\over \partial t^{n}}$ on $[-L,L]$, for $0\leq n\leq 2$ . Then;\\

${\partial^{2}G_{t}\over \partial t^{2}}=\sum_{m\in\mathcal{Z}}c''_{m}(t)e^{{\pi ixm\over L}}$\\

${\partial^{2}G_{t}\over \partial x^{2}}=\sum_{m\in\mathcal{Z}}c_{m}(t)({\pi im\over L})^{2}e^{{\pi ixm\over L}}=-\sum_{m\in\mathcal{Z}}c_{m}(t)({\pi^{2}m^{2}\over L^{2}})e^{{\pi ixm\over L}}$\\

Using the facts that ${\partial^{2}G_{t}\over \partial t^{2}}={T\over \mu}{\partial^{2}G_{t}\over \partial x^{2}}$, on $(0,L)$, the series $\sum_{m\in\mathcal{Z}}[c''_{m}(t)+c_{m}(t)({\pi^{2}m^{2}T\over \mu L^{2}})]e^{{\pi ixm\over L}}$ is analytic on $[-L,L]$, and $\{e^{{\pi ixm\over L}}:m\in\mathcal{Z}\}$ are orthogonal on $[-L,L]$, we obtain that;\\

$c''_{m}(t)+c_{m}(t)({\pi^{2}m^{2}T\over \mu L^{2}})=0$ $(t\in\mathcal{R})$\\

$c_{m}(t)=A_{m}e^{i\pi m\sqrt{T}t\over L\sqrt{\mu}}+B_{m}e^{-{i\pi m\sqrt{T}t\over L\sqrt{\mu}}}$\\

with $\{A_{m},B_{m}\}\subset\mathcal{C}$, $A_{m}=a_{m}+ia'_{m}$, $B_{m}=b_{m}+ib'_{m}$ and;\\

$G=\sum_{m\in\mathcal{Z}}A_{m}e^{i\pi m\sqrt{T}t\over L\sqrt{\mu}}e^{{\pi ixm\over L}}+\sum_{m\in\mathcal{Z}}B_{m}e^{-{i\pi m\sqrt{T}t\over L\sqrt{\mu}}}e^{{\pi ixm\over L}}$\\

Then ${\partial^{2}G\over\partial x^{2}}=-[\sum_{m\in\mathcal{Z}}A_{m}{\pi^{2}m^{2}\over L^{2}}e^{i\pi m\sqrt{T}t\over L\sqrt{\mu}}e^{{\pi ixm\over L}}+\sum_{m\in\mathcal{Z}}B_{m}{\pi^{2}m^{2}\over L^{2}}e^{-{i\pi m\sqrt{T}t\over L\sqrt{\mu}}}e^{{\pi ixm\over L}}]$\\

$=-\sum_{m\in\mathcal{Z}_{\neq 0}}(a_{m}+b_{m}){\pi^{2}m^{2}\over L^{2}}cos({\pi xm\over L})cos({\pi m\sqrt{T}t\over L\sqrt{\mu}})+\theta(x,t)=S_{t}$\\

where $\theta(0,0)=\theta(L,0)=0$\\

We have that;\\

 $|(a_{m}+b_{m})|={1\over 2L}|\int_{-L}^{L}G_{0}(x)cos({\pi xm\over L})dx|$\\

 $\leq {L^{n-1}\over 2\pi^{n}m^{n}}\int_{-L}^{L}|G_{0}^{(n)}|dx\leq {C_{0,n}\over m^{n}}$, for $0\leq n\leq 4$\\

 where $C_{0,n}={L^{n-1}||G_{0}^{(n)}||_{L^{1}(-L,L)}\over 2\pi^{n}}$.\\

Then;\\

$|{\partial^{2}G_{0}\over\partial x^{2}}|(0)=S_{0}\leq \sum_{m\in\mathcal{Z}_{\neq 0}}|a_{m}+b_{m}|({\pi^{2}m^{2}\over L^{2}})$\\

$\leq \sum_{1\leq |m|\leq k-1}|a_{m}+b_{m}|({\pi^{2}m^{2}\over L^{2}})+\sum_{|m|\geq k}{C_{0,n}\over m^{n}}({\pi^{2}m^{2}\over L^{2}})$\\

$=\sum_{1\leq |m|\leq k-1}|a_{m}+b_{m}|(({\pi^{2}m^{2}\over L^{2}})+\sum_{|m|\geq k}{L^{n-3}||G_{0}^{(n)}||_{L^{1}(-L,L)}\over 2\pi^{n-2}m^{n-2}}$\\

Taking $n=4$, we obtain;\\

$S_{0}\leq \sum_{1\leq |m|\leq k-1}|a_{m}+b_{m}|({\pi^{2}m^{2}\over L^{2}})+\sum_{|m|\geq k}{L\over 2\pi^{2}m^{2}}||G_{0}^{(4)}||_{L^{1}(-L,L)}$\\

$\leq \sum_{1\leq |m|\leq k-1}|a_{m}+b_{m}|({\pi^{2}m^{2}\over L^{2}})+{L\over{2\pi^{2}(k-1)}}||G_{0}^{(4)}||_{L^{1}(-L,L)}$, $(**)$\\

We have, by conditions $(i),(ii)$ of Lemma \ref{extension2'} and the FTC, that, for all $0<\epsilon<L$;\\

$|a_{m}+b_{m}|\leq {1\over L}\int_{-\epsilon}^{\epsilon}|G_{0}(x)|dx+\int_{L-\epsilon}^{-L+\epsilon}|G_{0}(x)|dx$\\

$\leq {1\over L}(|G_{0}(\epsilon)|+|G_{0}(-\epsilon)|+|G_{0}(L-\epsilon)|+|G_{0}(-L+\epsilon)|)$\\

$={1\over L}(|F_{0}(\epsilon)|+|G_{1,0}(-\epsilon)|+|G_{2,0}(-\epsilon)|+|F_{0}(L-\epsilon)|+|G_{1,0}(-L+\epsilon)|$\\

\indent \ \ \ $+|G_{2,0}(-L+\epsilon)|)$\\

$\leq {2L^{2}\over \pi^{2}(k-1)}({\delta'\over 2})$\\

for sufficiently small $\epsilon(k,\delta')$, as $F_{0}\in C_{0}([0,L])$ and $\{G_{1,0},G_{2,0}\}\subset C_{0}([-L,L])$.  Taking $k\geq {4L||G_{0}^{(4)}||_{L^{1}(-L,L)}+1\over \pi^{2}\delta'}$, we then have that $|S_{0}|<\delta'$.  Then, using condition $(i)$ of Lemma \ref{extension2'}, and the fact that ${\partial^{2}G_{0}\over \partial x^{2}}$ is continuous at $0$, we obtain that $lim_{\epsilon\rightarrow 0}{\partial^{2}F_{0}\over \partial x^{2}}(\epsilon)=0$ as required. In a similar way, using an expansion around an arbitrary $t_{0}\in\mathcal{R}$, we obtain that $lim_{\epsilon\rightarrow 0}{\partial^{2}F_{t_{0}}\over \partial x^{2}}(\epsilon)=0$, as required. By exactly the same method, we obtain that $lim_{\epsilon\rightarrow 0}{\partial^{2}F_{t_{0}}\over \partial x^{2}}(L-\epsilon)=0$.
\end{proof}

\begin{lemma}
Let $F\in C^{\infty}_{0}([0,L]\times\mathcal{R}$ be a solution to the wave equation. Then, the Fourier series expansion of $F$ is given by;\\

$\sum_{m\in\mathcal{Z}_{>0}}K_{m}cos({\pi m\sqrt{T}t\over L\sqrt{\mu}})sin({\pi xm\over L})+L_{m}sin({\pi m\sqrt{T}t\over L\sqrt{\mu}})sin({\pi xm\over L})$\\

which converges uniformly to $F$ on $[0,L]$.

\end{lemma}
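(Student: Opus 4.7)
My plan is to reduce the statement to the Fourier analysis already performed in the proof of Lemma~\ref{limit}, by producing an asymmetric $C^{2}$-extension of $F$ to $[-L,L]\times\mathcal{R}$. The key observation is that Lemma~\ref{limit} supplies precisely the vanishing conditions $F''_{t,+}(0) = F''_{t,-}(L) = 0$ for every $t \in \mathcal{R}$, which is the hypothesis $(*)$ of Lemma~\ref{asymmetric2} applied fibrewise in $t$. Applying that lemma, I obtain $H \in C^{2}([-L,L]\times\mathcal{R})$, with endfaces identified, satisfying $H|_{[0,L]\times\mathcal{R}} = F$, asymmetric about $0$ in $x$, and with $\frac{\partial H}{\partial x}$ symmetric about $0$. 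A direct chain-rule calculation shows that $H$ solves the wave equation on $(0,L)\times\mathcal{R}$ by hypothesis on $F$, on $(-L,0)\times\mathcal{R}$ by the asymmetric construction, and at $x=0$ by the $C^{2}$ continuity of both sides.

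Next I would repeat the Fourier analysis from Lemma~\ref{limit} applied to $H$ in place of $G$. The expansion
\[
H(x,t) = \sum_{m \in \mathcal{Z}} c_{m}(t)\, e^{\pi i x m / L}
\]
converges uniformly in $x$ on $[-L,L]$ for each $t$, with $c_{m} \in C^{2}(\mathcal{R})$, and the wave equation together with orthogonality of the exponentials yields the ODE $c_{m}''(t) + \omega_{m}^{2} c_{m}(t) = 0$, where $\omega_{m} = \frac{\pi m\sqrt{T}}{L\sqrt{\mu}}$. Hence $c_{m}(t) = A_{m} e^{i\omega_{m} t} + B_{m} e^{-i\omega_{m} t}$ for complex constants $A_{m}, B_{m}$.

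The collapse to the stated sine form rests on two parity facts. First, the substitution $u = -x$ in the integral defining $c_{m}$ together with $H(-u,t) = -H(u,t)$ gives $c_{-m}(t) = -c_{m}(t)$. Second, realness of $H$ gives $c_{-m}(t) = \overline{c_{m}(t)}$, so every $c_{m}(t)$ is purely imaginary. Writing $c_{m}(t) = i d_{m}(t)$ with $d_{m}$ real and pairing $m$ with $-m$ via $e^{i\pi x m/L} - e^{-i\pi x m/L} = 2i\sin(\pi x m/L)$ converts the series to
\[
H(x,t) = \sum_{m>0} \bigl[K_{m}\cos(\omega_{m} t) + L_{m}\sin(\omega_{m} t)\bigr] \sin\!\left(\frac{\pi x m}{L}\right)
\]
with real constants $K_{m}, L_{m}$. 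Restricting to $[0,L]$ and using $H|_{[0,L]\times\mathcal{R}} = F$ gives the stated formula, with the required uniform convergence on $[0,L]$ inherited from that on $[-L,L]$.

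I expect the real obstacle to have already been overcome: producing a $C^{2}$ asymmetric extension is genuinely nontrivial and required the vanishing of $\partial^{2} F/\partial x^{2}$ at the boundary, which only became available through Lemma~\ref{limit}. Once $H$ is in hand, the remaining steps are a routine parity and separation-of-variables argument built on the Fourier machinery already established, and I do not anticipate further technical difficulty.
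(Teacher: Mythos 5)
Your proposal is correct, and it reaches the paper's key waypoint --- an asymmetric periodic extension of $F$ --- by a genuinely shorter route than the paper takes. The paper never applies Lemma \ref{asymmetric2} to $F$ directly: it keeps the decomposition $F=F_{1}+F_{2}$ and the pair of extensions $G_{1}$, $G_{2}$ from Lemmas \ref{decomp2'} and \ref{extension2'}, uses Lemma \ref{limit} to show that $\partial^{2}G_{2,t}/\partial x^{2}$ vanishes at the endpoints, invokes Lemma 0.3 of \cite{dep} to get the same for the fourth derivative, and only then kills the symmetric piece ($G_{2}=0$) via the explicit polynomial form of that piece in Lemma \ref{decomp2'}; what survives is a $C^{4}$ asymmetric extension of $F$. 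Your one-step alternative --- feed the conclusion of Lemma \ref{limit} straight into hypothesis $(*)$ of Lemma \ref{asymmetric2} --- is legitimate, and it buys you a real simplification: no decomposition, no appeal to the external fourth-derivative lemma. The price is that you get only a $C^{2}$ extension $H$ rather than a $C^{4}$ one, and this matters in the second half: with a $C^{4}$ extension the coefficients decay like $m^{-4}$, so the Fourier series of $\partial^{2}H/\partial x^{2}$ converges uniformly and the paper's termwise identification is immediate, whereas with $H_{t}\in C^{2}$ you only have $|c_{m}(t)|=O(m^{-2})$ and the ODE $c_{m}''(t)+\frac{\pi^{2}m^{2}T}{\mu L^{2}}c_{m}(t)=0$ must be derived at the level of coefficients --- integrate the wave equation against $e^{-\pi i x m/L}$ and integrate by parts twice, the boundary terms cancelling because $H$ and $\partial H/\partial x$ are continuous across the identification $-L\sim L$ and at $0$. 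Your phrase ``orthogonality of the exponentials'' is silently doing that work and should be spelled out. The remainder of your argument --- the parity computation $c_{-m}=-c_{m}=\overline{c_{m}}$ forcing a pure sine series with real time-coefficients, and restriction to $[0,L]$ --- coincides with the paper's second half, and the uniform convergence claim is correctly inherited from the $C^{2}$ regularity of $H_{t}$.
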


\begin{proof}

By Lemma \ref{limit}, we have that, for $t\in\mathcal{R}$, $lim_{\epsilon\rightarrow 0}{\partial^{2}F_{t}\over x^{2}}(\epsilon)=lim_{\epsilon\rightarrow 0}{\partial^{2}F_{t}\over x^{2}}(L-\epsilon)=0$. Using the fact;\\

$lim_{\epsilon\rightarrow 0}{\partial^{2}G_{1,t}\over x^{2}}(\epsilon)=lim_{\epsilon\rightarrow 0}{\partial^{2}G_{1,t}\over x^{2}}(L-\epsilon)=0$\\

from Lemma \ref{extension2'}, we obtain that;\\

$lim_{\epsilon\rightarrow 0}{\partial^{2}G_{2,t}\over x^{2}}(\epsilon)=lim_{\epsilon\rightarrow 0}{\partial^{2}G_{2,t}\over x^{2}}(L-\epsilon)=0$\\

Using Lemma 0.3 of \cite{dep}, we obtain that;\\

$lim_{\epsilon\rightarrow 0}{\partial^{4}G_{2,t}\over x^{4}}(\epsilon)=lim_{\epsilon\rightarrow 0}{\partial^{4}G_{2,t}\over x^{4}}(L-\epsilon)=0$\\

Hence, by Definition of $G_{2}$ in \ref{decomp2'},\ref{extension2'}, we obtain that $G_{2}=0$. It follows that there exists $G_{1}\in C^{4}_{0}([-L,L]\times\mathcal{R})$, with $G_{1}$ asymmetric about $0$, such that $G_{1}|_{[0,L]}=F$.\\

Let $h\in C^{4}_{0}([-L,L])$ be an asymmetric function, and let;\\

$h(x)=\sum_{m\in\mathcal{Z}}\hat{h}(m)e^{\pi ixm\over L}$ be the Fourier series expansion of $h$, with;\\

$\hat{h}(m)={1\over 2L}\int_{-L}^{L}h(x)e^{-\pi ixm\over L}$, for $m\in\mathcal{Z}$\\

We have that;\\

$\hat{h}(m)={1\over 2L}\int_{-L}^{L}h(x)cos({\pi xm\over L})dx-{i\over 2L}\int_{-L}^{L}h(x)sin({\pi xm\over L})dx$\\

$={-i\over 2L}\int_{-L}^{L}h(x)sin({\pi xm\over L})dx={-i\over L}\int_{0}^{L}f(x)sin({\pi xm\over L})=ie_{m}$\\

with $e_{m}=-e_{-m}$, for $m\geq 0$, so $e_{0}=0$. Then;\\

$h(x)=-\sum_{m\in\mathcal{Z}_{>0}}2e_{m}sin({\pi xm\over L})$ \\

Then writing;\\

$G_{1}(t,x)=\sum_{m\in\mathcal Z_{>0}}f_{m}(t)sin({\pi xm\over L})$\\

and substituting into $(*)$ of Definition \ref{wave}, justified by the method of Lemma \ref{limit} and the fact that $G_{1}\in C^{4}([-L,L]\times\mathcal{R})$, we have that;\\

$\sum_{m\in\mathcal{Z}_{>0}}f_{m}''(t)sin({\pi xm\over L})=-{T\over \mu}(\sum_{m\in\mathcal{Z}}f_{m}(t)({\pi m\over L})^{2} sin({\pi xm\over L}))$\\

Hence, $f_{m}''(t)=-{T\over \mu}f_{m}(t)({\pi m\over L}^{2})=-{\pi^{2}m^{2}T\over L^{2}\mu}f_{m}(t)$\\

$f_{m}(t)=K_{m}cos({\pi m\sqrt{T}t\over L\sqrt{\mu}})+L_{m}sin({\pi m\sqrt{T}t\over L\sqrt{\mu}})$\\

giving;\\

$G_{1}(t,x)=\sum_{m\in\mathcal Z_{>0}}K_{m}cos({\pi m\sqrt{T}t\over L\sqrt{\mu}})sin({\pi xm\over L})+L_{m}sin({\pi m\sqrt{T}t\over L\sqrt{\mu}})sin({\pi xm\over L})$\\

where the convergence is uniform on $[-L,L]$. Using the fact that $G_{1}|_{[0,L]}=F$, by Lemma \ref{decomp2}, we obtain that the series converges uniformly to $F$ on $[0,L]$ as required.

\end{proof}

\end{document}